\DeclareMathOperator{\R}{\mathbb{R}} % Real numbers
\DeclareMathOperator{\N}{\mathbb{N}} % Natural numbers
\DeclareMathOperator{\Z}{\mathbb{Z}} % Integers
\DeclareMathOperator{\var}{Var} %var
\DeclareMathOperator{\0}{\mathbf{0}} %the origin
\DeclareMathOperator{\geo}{\gamma} %geodesic gamma
\newcommand{\olle}{H\"{a}ggstr\"{o}m}
\newcommand{\Zz}{\mathbb{Z}^2}
\newcommand{\pt}{\mathbf{t}}
\newcommand{\EE}{\mathbb{E}} % Expectation E
\newcommand{\B}{\mathbf{B}} %limiting shape
\newcommand{\T}{\mathcal{T}} %tree of geodesics
\newcommand{\vgw}{\gamma(v,w)}%geodesic from v to w
\newcommand{\region}{\mathcal{V}}
\newcommand{\Dir}{\mathrm{Dir}}
\newcommand{\slope}{x_n^{\lambda}}
\newcommand{\cyl}{\text{Cyl}}
\newcommand{\tn}{T(\0,(n,0))}
\newcommand{\strike}{Q}
\renewcommand{\P}{\mathbb{P}} % Probability P
\renewcommand{\E}{\mathcal{E}}
\begin{document}

\title*{Geodesic rays and exponents in ergodic planar first passage percolation.}
% Use \titlerunning{Short Title} for an abbreviated version of
% your contribution title if the original one is too long
\author{Gerandy Brito and Christopher Hoffman}
% Use \authorrunning{Short Title} for an abbreviated version of
% your contribution title if the original one is too long
\institute{Gerandy Brito \at Georgia Institute of Technology, College of Computing 801 Atlantic Drive. Atlanta, GA 30332-0280 USA, \email{gbrito3@gatech.edu}
\and Christopher Hoffman \at University of Washington, Department of Mathematics
Box 354350, C-138 Padelford. Seattle, WA 98195-4350 USA, \email{hoffman@math.washington.edu}}
%
% Use the package "url.sty" to avoid
% problems with special characters
% used in your e-mail or web address
%
\maketitle

 \abstract*{We study first passage percolation on the plane for a family of invariant, ergodic measures %$\{\nu_{\alpha}\}$ 
 on $\Z^2$. We prove that  for all of these models the asymptotic shape is the $\ell_1$ ball and that there are exactly four infinite geodesics starting at the origin a.s. In addition we determine the exponents for the  
 variance and wandering of finite geodesics. We show that the variance and wandering exponents do not satisfy the relationship of $\chi= 2\xi -1$ which is expected for independent first passage percolation.}

\abstract{We study first passage percolation on the plane for a family of invariant, ergodic measures %$\{\nu_{\alpha}\}$ 
 on $\Z^2$. We prove that  for all of these models the asymptotic shape is the $\ell_1$ ball and that there are exactly four infinite geodesics starting at the origin a.s. In addition we determine the exponents for the  
 variance and wandering of finite geodesics. We show that the variance and wandering exponents do not satisfy the relationship of $\chi= 2\xi -1$ which is expected for independent first passage percolation.}

\section{Introduction} \label{sec:motivation}

First passage percolation is a widely studied model in statistical physics. One of the
main reasons for interest in first passage percolation is that it is believed that, for independence passage times (and under mild assumptions on the common distribution) the model belongs to the KPZ universality class.
The study of first passage percolation has centered on the three main sets of questions below. 
(Precise definitions are given in the next two sections.)
\begin{enumerate}
\item {\bf Asymptotic shape}. Cox and Durrett proved that every model of first passage percolation has an asymptotic shape $\mathbf{B} \subset \R^2$ which is convex and has the symmetries of $\Z^2$ \cite{CD81}. We would like to determine $\mathbf{B}$ or at least describe some of its properties. In particular is the asymptotic shape is strictly convex and is its boundary differentiable?
\item {\bf Infinite geodesics from the origin}. Are there infinitely many one-sided infinite geodesics that start at 
$(0,0)$? Do these geodesics all have asymptotic directions?
\item {\bf Variance and wandering exponents}. For any $\lambda \geq 0$ does there exist a variance exponent  $\chi=\chi(\lambda)$ such that
\begin{equation*}
\var(T(0,(n,\lambda n))=n^{2\chi+o(1)}?
\end{equation*} Does there exist a wandering exponent $\xi=\xi(\lambda)$ such that with high probability 
every edge in $\geo(\0,(n,\lambda n))$ is within distance $n^{\xi+o(1)}$ of the line segment connecting
$\0$ and $(n,\lambda n)$? Do $\chi$ and $\xi$ satisfy the universal scaling relation
\begin{equation*}
\chi= 2 \xi -1?
\end{equation*}
\end{enumerate}

It is widely believed that
(under mild assumptions) in independent first passage percolation the answer to all of these questions is yes.
However in our models we show that the answer all of these questions is at least somewhat different than the answers that are expected for the independent case. Thus our model shows that universality cannot 
be expected to hold for all models of ergodic first passage percolation.
Our results are as follows.

\begin{enumerate}
\item For all of our models the asymptotic shape $\mathbf{B}$ is the unit ball in the $\ell_1$--norm. 
\item Our models have exactly four one-sided infinite geodesics starting from the origin a.s., each of which meander through a quadrant.
\item For each value of $\lambda$ we calculate exact variance and wandering exponents of the geodesic
from $\0$ to $(n,\lambda n)$. 
For all $\lambda>0$ the
variance exponent $\chi$ is zero while the wandering exponent is 1. For $\lambda=0$ we get variance and wandering exponents that satisfy $0<\chi=\xi <1$. In neither of these cases do the exponents satisfy the universal scaling relation
$\chi=2\xi -1$.
\end{enumerate}

It is already known that there exist models of ergodic first passage percolation whose behavior is different
from what is expected for independent first passage percolation. \olle \text{}
and Meester showed that for any set $\mathbf{B}\subset \R^2$ which is bounded, convex and has non-empty interior and all the symmetries of 
$\Z^2$ there is a model of ergodic first passage percolation that has $\mathbf{B}$ as its limiting shape \cite{hagmee95}. The examples
we construct show that the there are models of ergodic first passage percolation that
have anomalous geodesic structures. More interestingly our models have  anomalous variance and wandering exponents and these exponents depend on the direction. We are not aware of any other non-trivial models of 
ergodic first passage percolation where the variance and wandering exponents have been explicitly calculated.

\section{Background on first passage percolation.} \label{sec:intro}
%%%%%%%%%%%%%%%%%%%%%%%%%%%%%%%%%%%%%%%%%%%%
In first passage percolation, a nonnegative variable is associated to each edge of a given graph. These variables give rise to a random metric space. Among the fundamental objects of study of this metric space are the scaling properties of balls and the structure of geodesics. By planar first passage percolation, we refer to the model on the lattice, denoted by $\Zz$, which has vertex set $V=\{(x,y)~ x,y\in \Z\}$ and edge set $\E=\{(v,w): |v-w|=1 \}\subset V\times V$ where $|\cdot|$ denotes the $\ell_1$ distance. A configuration of $\Zz$ is simply a function from the edge set to the nonnegative real numbers:
\begin{equation}
\pt: \E\rightarrow [0,\infty).% \R_{+}
\end{equation}

We will use the more common notation $\pt_e$ for $\pt(e)$. If $\nu$ is a probability measure on $[0,\infty)^{\E}$, we denote the probability space with state space $[0,\infty)^{\E}$ and measure $\nu$ by $FPP(\nu)$ . 
%obtained by taking $\pt\sim \nu$. 
The number $\pt_e$ can be seen as the passage time or length of the edge $e$. Given a configuration $\pt$ on $\Zz$ and a path $\pi=\{e_i\}_{i=1}^{k}$ the length of $\pi$ is $$\Gamma(\pi)=\sum_{i=1}^k \pt_{e_i}.$$
The distance between two vertices $u$ and $v$ is denoted by $T(u,v)$ and it is defined as
\begin{equation}\label{def:dist}
T(u,v)=\inf \Gamma(\pi)
\end{equation}
where the $\inf$ is taken over the set of all paths connecting $u$ and $v$. It is not hard to check that $(\Zz,T(\cdot,\cdot))$ is a pseudometric space for any configuration. Furthermore, if the values $\pt_e$ are all bigger than zero then $T(\cdot, \cdot)$ is a metric. As we will see in section \ref{sec:odometer}, our measures will be bounded away from zero, so for the rest of the paper $(\Zz,T(\cdot,\cdot))$ is a metric space. The ball of radius $R$ centered at $u$ is
\begin{equation}
B(u,R)=\{v\in V:~T(u,v) < R\}.
\end{equation}

Cox and Durrett \cite{CD81} studied the behavior of large balls after scaling. They proved that, if $\pt_e\sim \nu$ satisfying
\begin{equation}
\EE(\min\{\pt_1^2,\pt_2^2,\pt_3^2,\pt_4^2\})<\infty
\end{equation}
for independent copies of $\pt_e$, and the mass at zero is less than the threshold for bond percolation then there is a non-empty set $\mathbf{B}$, compact, convex and symmetric with respect to the origin such that, for any $\epsilon>0$
\begin{equation}
\P \left((1-\epsilon)\mathbf{B}\subset \frac{B(\0,R)}{R}\subset (1+\epsilon)\mathbf{B}~ \mbox{for all large} ~R\right)=1.
\end{equation}
Boivin extended this to a wide class of ergodic models of first passage percolation \cite{boivin90}.

The question of which compact sets can be obtained as limit in FPP is almost entirely open for the i.i.d. case. Interestingly, when we consider the bigger set of stationary and ergodic measures on $(\mathbb{R_+})^{\E}$ it was proved by Haggstrom and Meester \cite{HM95} that any compact, convex, symmetric (with respect to the origin) set is the limiting shape for a stationary and ergodic measure, not necessarily i.i.d. It is worth mentioning that the limiting shape $\B$ is the unit ball of a norm $\|\cdot\|_{\nu}$. This norm can be computed as follows. First, we extend $T(\cdot, \cdot)$ to $\R^2$ by setting $T(u,v)=T(\lfloor u\rfloor, \lfloor v\rfloor)$ for all $u,v\in \R^2$. Here we slightly abused notation using the floor function on points. The reader should understand this by applying it to each coordinate. It can be shown, under mild assumptions on the distribution of $\pt_e$, that the norm $\|x\|_{\nu}$ satisfies

\begin{equation*}
\|x\|_{\nu}=\lim_{n}\frac{T(\0,nx)}{n}
\end{equation*}
where the limit exists a.s. and in $L_1$ for every fixed $x\in \R^2$. The set $\B$
\begin{equation}\label{normB}
\B=\{x\in \R^2:~\|x\|_{\nu}\leq 1~\}.
\end{equation}

A \textit{geodesic} between $u$ and $v$ is a path that realizes the infimum in \eqref{def:dist}. We denote geodesics by $\gamma(u,v)$. Geodesics aren't always unique. A simple condition to guarantee such property for independent edge weights is to consider continuous distribution for $\pt_e$. A \textit{geodesic ray} is an infinite path $\{v_0,v_1,v_2, \dots\}$ such that every finite sub-path is a geodesic between its endpoints. We consider two geodesic rays to be distinct if they intersect in only finitely many edges. We denote by $\T_0$ the set of all geodesic rays starting at the origin. Ahlberg and Hoffman \cite{AH16} recently showed that for a wide class of measures the cardinality of $\T_0$ is constant almost surely, possibly $\infty$.

\section{Statement of Results}\label{results}

The limiting shape $\B$ is closely related to the number and geometry of geodesic rays for ergodic FPP. Let $sides(\B)$ denote the number of sizes of $\B$ if it is a polygon, and infinity otherwise, Hoffman (\cite{hoffman05}, \cite{H08}) proved that, for any $k\leq sides(\B)$ there exist $k$ geodesic rays almost surely, for \textit{good measures}, see Section \ref{sec:odometer} for details.  In particular, his results imply that there exist at least four geodesics a.s. When $\B$ is a polygon, little is known about existence of geodesics rays in the direction of the corners of $\B$. Recently, Alexander and Berger \cite{aleber18} exhibit a model for which the limiting shape is an octagon and all (possibly infinitely many) geodesic rays are directed along the coordinate axis. Our first result shows that our model has exactly four geodesic rays a.s.. To the best of our knowledge, this is the first known FPP model for which $|\T_0|$ is finite.

\begin{theorem}\label{four_rays}
There exists a family of measures $\{\nu_{\alpha}\}_{0<\alpha<0.2}$ such that $|\T_0|=4$ $\nu_{\alpha}$-almost surely.
\end{theorem}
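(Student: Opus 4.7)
The strategy is to prove the matching bounds $|\T_0|\ge 4$ and $|\T_0|\le 4$ separately, almost surely under $\nu_\alpha$. The lower bound is essentially read off from the introduction. Once the measures $\nu_\alpha$ are shown in Section~\ref{sec:odometer} to be good in the sense of Hoffman and to have the $\ell_1$ ball as their asymptotic shape, the fact that $\mathrm{sides}(\B)=4$ combined with Hoffman's theorem recalled above gives at least four distinct geodesic rays from $\0$ almost surely. The entire content of the theorem is therefore the upper bound $|\T_0|\le 4$.

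I would attack the upper bound in two stages. Stage one is \emph{quadrant confinement}: every infinite geodesic $\Gamma=(\0,v_1,v_2,\ldots)\in\T_0$ is eventually contained in a single closed quadrant. The intuition is that $\|x\|_\nu=|x_1|+|x_2|$ charges only the net displacement of a path, so a prefix that crosses a coordinate axis pays a linear price in extra edges without any matching gain in $\|\cdot\|_\nu$-distance to its endpoint. Taking a prefix of $\Gamma$ long enough that these excess crossings accumulate beyond the sublinear fluctuations of $T$ about its mean would produce a strictly shorter monotone staircase to the same endpoint, contradicting the geodesic property of $\Gamma$. The result is that each $\Gamma\in \T_0$ commits to a single quadrant from some index on.

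Stage two is uniqueness within each quadrant, which I would establish by a coalescence argument: any two rays $\Gamma,\Gamma'\in\T_0$ lying in the same quadrant must share infinitely many edges, and hence coincide in the sense of $\T_0$. The natural tool is the explicit structure of $\nu_\alpha$ from Section~\ref{sec:odometer}: the odometer construction supplies a deterministic ``skeleton'' under which all monotone paths between two points of the quadrant have the same total weight, together with an $\alpha$-dependent perturbation whose strong row/column correlations uniquely select how a minimising path crosses each horizontal (or vertical) line. Tracking the crossings of $\Gamma$ and $\Gamma'$ through successive lines and invoking ergodicity of the perturbation then forces them to agree from some scale onwards.

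The main obstacle will be exactly this coalescence step, since $\B=\ell_1$ is flat on every side and the classical Licea--Newman--Busemann machinery, which relies on strict convexity of the shape, is unavailable. The argument must instead exploit the quantitative tie-breaking provided by the $\alpha$-perturbation, with the hypothesis $\alpha<0.2$ playing the role of a smallness threshold: $\alpha$ has to be large enough to select a unique minimiser on every scale, yet small enough that the perturbation does not destroy the $\ell_1$ shape nor the ``good measure'' hypothesis underlying the lower bound.
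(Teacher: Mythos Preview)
Your lower bound is exactly what the paper does. The upper bound, however, is where your proposal diverges substantially from the paper and where it has a genuine gap.

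The paper does \emph{not} argue via quadrant confinement and coalescence. Instead it builds, in Section~\ref{sec:det}, a sequence of structural lemmas (Lemma~\ref{square}, Lemma~\ref{rectangle}, Proposition~\ref{samevertex}) showing that geodesics between points of the $k$-grid stay in the $k$-grid once they are long enough. From this one deduces Lemma~\ref{inf_geo}: every geodesic ray from $\0$ must pass through one of the four corners of the square $\region_k(\0)$, for every $k$. The upper bound $|\T_0|\le 4$ is then immediate by pigeonhole: five rays that have already separated would have to occupy five distinct corners of some $\region_k$, which is impossible. No coalescence, no Busemann machinery, no appeal to the shape is needed for this half.

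Your Stage~1 heuristic is fragile in this model. Geodesics here are \emph{not} monotone staircases: the geodesic from $\0$ to $(n,0)$ wanders to height of order $n^\beta$ (Lemma~\ref{coaching}), so ``replace by a strictly shorter monotone staircase'' is not a valid shortcut. Since each ray $\gamma_i$ has direction set equal to an entire closed quadrant (Theorem~\ref{dir_rays}), it visits points arbitrarily close to the axes, and your linear-cost argument for axis crossings does not control those prefixes. Your Stage~2 is the more serious gap: the claim that the odometer gives a ``deterministic skeleton under which all monotone paths have the same total weight'' is false---the weight $1+\alpha^{k(e)}+X_{k(e),e}$ depends on which grid line each edge lies on, so two monotone paths between the same endpoints generically have different deterministic parts. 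Without strict convexity of $\B$, a coalescence argument of the type you sketch would need a replacement for exactly the structural lemmas the paper proves, and your proposal does not supply one. Finally, the threshold $\alpha<0.2$ is not a balance between ``large enough to break ties'' and ``small enough to preserve the shape''; it is simply the condition $5\alpha<1$ used repeatedly in the grid comparisons (e.g.\ in the proof of Lemma~\ref{square}).
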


Our next result is about the direction of geodesic rays. We start with a definition. The \textit{direction}, $Dir(\Pi)$, of a sequence of (not necessarily disticnt) points $\Pi=\{v_k,~k\geq 0\}$ is the set of limits of $\{v_k/|v_k|,~v_k\in \Pi\}$. If $|v_k| \to \infty$ then $Dir(\Pi)$ is a connected subset of $S^1$. Damron and Hanson \cite{DH14} were the first to prove directional results for geodesic rays for \textit{good measures}  that also have the upward finite  energy property. Their results are also dependent on the geometry of $\B$ in the following way. We say that a linear functional $\rho:\mathbb{R}^2\rightarrow \mathbb{R}$ is tangent to $\B$ if the line $\{x\in \mathbb{R}^2:~\rho(x)=1\}$ is tangent to $\B$ at %$v_{\rho}$, 
a point of differentiability of the boundary of $\B$. In view of equation \eqref{normB}, we can write the intersection of this tangent line and the boundary of $\B$ as %$\{x\in \mathbb{R}^2:~\rho(x)=\nu(x)\}$. Equivantenly, it can be represented 
a set in $S^1$:
\begin{align}\label{def: gen_dir}
D_{\rho}=\{x\in S^1:~ \rho(x)=\|x\|_{\nu} \}.
\end{align}

\cite[Theorem 1.1]{DH14} states that for any functional $\rho$, tangent to $\B$, there is an element $\gamma\in \T_0$ satisfying $Dir(\gamma)\subset D_{\rho}$. Because of the differentiability condition, their result gives no information about the behavior around corners of $\B$. For our family of measures $\{\nu_{\alpha}\}$ we are able to completely characterize the directions of geodesic rays. 

\begin{theorem}\label{dir_rays}
Fix $\alpha$ such that $0<\alpha<0.2$ and consider  $FPP(\nu_{\alpha})$.
Let $\rho$ be a linear functional tangent to the $\ell_1$-ball. There is exactly one geodesic with generalized direction equal to $D_{\rho}$.
\end{theorem}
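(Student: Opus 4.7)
The plan is to combine the counting result from Theorem \ref{four_rays} with the existence theorem of Damron--Hanson \cite{DH14} and the specific wandering behavior built into the measures $\nu_\alpha$. First I would identify the tangent structure of $\B$: since $\B$ is the $\ell_1$-ball, its boundary has four open differentiable sides and four corners at $(\pm 1, 0)$ and $(0, \pm 1)$, and each side carries a unique tangent linear functional $\rho_i$, $i=1,2,3,4$, with $D_{\rho_i}$ equal to the closed arc of $S^1$ lying in the corresponding closed quadrant. These four arcs cover $S^1$ and intersect pairwise only at the four coordinate directions.

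Next I would invoke \cite[Theorem 1.1]{DH14} to produce, for each tangent functional $\rho_i$, at least one geodesic ray $\gamma_i \in \T_0$ with $\Dir(\gamma_i) \subset D_{\rho_i}$; one must verify that $\nu_\alpha$ satisfies the hypotheses of that theorem (being a good measure with upward finite energy), which should follow from the construction in Section \ref{sec:odometer}. Combined with Theorem \ref{four_rays}, which asserts $|\T_0|=4$ almost surely, a pigeonhole argument forces each quadrant arc $D_{\rho_i}$ to receive exactly one ray: otherwise either some $D_{\rho_i}$ would receive none, contradicting Damron--Hanson, or some would receive two, violating the total count. This gives the uniqueness asserted in the theorem.

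The remaining step, which is also the main obstacle, is upgrading the inclusion $\Dir(\gamma_i) \subset D_{\rho_i}$ to the equality $\Dir(\gamma_i) = D_{\rho_i}$. I would argue by contradiction: if $\Dir(\gamma_i)$ were a proper closed subarc of $D_{\rho_i}$, I would choose vertices $v_n$ with $v_n/|v_n|$ tending to a direction $u \in D_{\rho_i} \setminus \Dir(\gamma_i)$, take subsequential limits of the finite geodesics $\gamma(\0,v_n)$ via a Busemann-function extraction, and thereby produce an additional geodesic ray with direction in $\overline{D_{\rho_i} \setminus \Dir(\gamma_i)}$, contradicting $|\T_0|=4$. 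The extraction step would lean on the anomalous wandering specific to $\nu_\alpha$ --- captured by $\xi(\lambda)=1$ for all $\lambda > 0$ --- in order to keep $\gamma(\0,v_n)$ inside the quadrant of $v_n$ rather than letting it coalesce prematurely with $\gamma_i$ along a proper subarc. Establishing this equality therefore genuinely uses the structure of $\nu_\alpha$; the general results of Damron--Hanson together with Theorem \ref{four_rays} only assign one ray per quadrant and do not on their own guarantee that each ray accumulates on every direction in its arc.
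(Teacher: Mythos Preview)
Your first two paragraphs---identifying $D_{\rho_i}$ with the closed quadrant arcs and then combining Damron--Hanson with Theorem~\ref{four_rays} to assign exactly one ray $\gamma_i$ to each quadrant---match the paper's Lemma~\ref{dh lemma} essentially verbatim.

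The gap is in your third paragraph. A subsequential limit of finite geodesics $\gamma(\0,v_n)$ is taken locally (via a diagonal argument on edges near $\0$), and there is no mechanism that forces the direction set of the limiting ray to contain $u=\lim v_n/|v_n|$ or even to lie in $\overline{D_{\rho_i}\setminus\Dir(\gamma_i)}$. The limit will be one of the four existing rays, most plausibly $\gamma_i$ itself, and you obtain no contradiction. Your appeal to $\xi(\lambda)=1$ works against you rather than for you: that exponent says $\gamma(\0,v_n)$ wanders on scale $n$ \emph{away} from the segment $[\0,v_n]$, so it gives no confinement to the quadrant and no separation from $\gamma_i$.

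The paper closes this gap by a completely different, structural route. From Lemma~\ref{inf_geo} and the lemma following Lemma~\ref{dh lemma}, each $\gamma_i$ passes through the corner $v^k_i$ of the square $\region_k(\0)$ for all sufficiently large $k$. Lemma~\ref{corner_distribution} then shows, via a Borel--Cantelli argument on the $5$-adic odometer coordinates of $(\omega_1,\omega_2)$, that the arguments $\theta^k_1$ of these corners come within $\epsilon$ of any prescribed $\theta\in(0,\pi/2)$ infinitely often. Hence $\Dir(\gamma_i)$ contains points arbitrarily close to both endpoints of the quadrant arc; connectedness of $\Dir(\gamma_i)$ and closedness then give $\Dir(\gamma_i)=D_{\rho_i}$. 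The equality step thus rests on the explicit corner structure of $\nu_\alpha$, not on limiting finite geodesics.
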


Lastly, we turn our attention to the geometry of finite geodesics. We follow the classical approach and study it both from the random and the geometric point of view. For the first one, the most basic analysis comes from understanding the variance of $T(\0, x)$. As stated informally in the introduction, it is believed that there exist a universal exponent that governs this quantity. For our model, we show the existence of a constant $\chi = \chi(\lambda)$ and a universal constant $K$ such that 
\begin{equation*}
\frac{1}{K}\leq \frac{Var (T(\0,(n,\lambda n)))}{n^{2\chi}}\leq K,
\end{equation*}
see Lemma   \ref{naomi} and Lemma \ref{thief} for the formal statements. We point out that our results are strong enough that they satisfy any reasonable definition of  the variance exponent, in particular those suggested in \cite{ADH17}. 

From the geometric perspective, we look at how far is $\gamma(\0,(n,\lambda n))$ from the straight line connecting the origin and the point $(n,\lambda n)$. As in the case of the variance exponent, it is widely believed that there is a universal constant, denoted by $\xi$, such that the maximal distance between  $\gamma(\0,(n,\lambda n))$ and the line through the origin and  $(n,\lambda n)$ is of order $n^{\xi}$. So, if one sets $\cyl(\slope, N)$ to be the set of points within (Euclidean) distance $N$ from the line connecting $\0$ and $\slope=(n,\lambda n)$, it is expected that $N=n^{\xi}$ is the right scale of cylinder to contain $\gamma(\0,\slope)$. To formally capture this property we adopt the definition in \cite{NP95} which we reproduce below.

\begin{definition}\label{def:xi}
For $0\leq \lambda \leq 1$ and $\slope=(n,\lambda n)$ we set
\begin{equation*}
\xi_{\lambda}=\inf\{\alpha>0:~ \liminf_n~\P(\gamma(\0,\slope)\subset \cyl(\slope,n^{\alpha}))>0\}.
\end{equation*} 
\end{definition}

We compute this exponent in any direction, and confirm non-universality of FPP for invariant, but not necessarily identically distributed, edge weight.

\begin{theorem}\label{Thm3}
Fix $\alpha$ such that $0<\alpha<0.2$ and consider  $FPP(\nu_{\alpha})$.
In every direction not parallel to the coordinate axes we have the variance exponent $\chi= 0$ and
the wandering exponent $\xi=1$. Parallel to the coordinate axes the two exponents are equal with
$\chi =\xi = \frac{\log 5}{\log 5 -\log \alpha}$. In no direction do the exponents satisfy the universal scaling
relation.
\end{theorem}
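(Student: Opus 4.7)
The plan is to split the argument by direction, handling the off-axis regime $\lambda>0$ and the axis-parallel case $\lambda=0$ separately. The axes are the corner directions of the $\ell_1$ ball and so behave qualitatively differently from any direction inside a face, which forces the split. In both regimes the common starting point is Theorem~\ref{four_rays}: the asymptotic shape is the $\ell_1$ ball, so every monotone lattice path from $\0$ to $(n,\lambda n)$ has length close to $(1+\lambda)n$, and only the specific structure of $\nu_{\alpha}$ from Section~\ref{sec:odometer} can produce meaningful fluctuations on top of this deterministic behavior.

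For the off-axis case ($\lambda>0$), I would establish $\xi=1$ by exhibiting, with probability bounded away from $0$, monotone paths that deviate linearly from the straight line to $(n,\lambda n)$ and still beat every near-diagonal competitor, using the cheap axis-lanes built into $\nu_{\alpha}$. This forces the geodesic out of every sub-linear cylinder. The variance exponent $\chi=0$ should then follow from a decomposition of $T(\0,(n,\lambda n))$ into a deterministic linear-in-$n$ contribution (coming from the $\ell_1$ shape combined with the odometer structure) and a bounded boundary correction concentrated near $\0$ and $(n,\lambda n)$; Lemmas~\ref{naomi} and~\ref{thief} mentioned in the text provide the matching upper and lower bounds on $\var(T)$.

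For the axis-parallel case ($\lambda=0$), the explicit formula $\chi=\xi=\log 5/(\log 5-\log\alpha)$ essentially forces a multiscale argument keyed to the base-$5$ hierarchical structure of $\nu_{\alpha}$. I would analyze how $\gamma(\0,(n,0))$ trades transversal deviation against excess passage time: at scale $k$, the construction allows a detour of order $5^{k}$ in the $y$-direction at extra cost of order $\alpha^{k}$. Balancing these two effects across all scales up to $\log_{5} n$ and optimizing over $k$ produces the optimal scale $k^{\ast}\sim \frac{\log n}{\log 5 - \log \alpha}$, which yields transversal fluctuations of order $n^{\xi}$ and variance of order $n^{2\chi}$, with $\chi=\xi$ because the same scale dominates both quantities. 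The upper bound on $\xi$ would require ruling out detours beyond the optimal scale via a union bound across scales, while the lower bound requires showing that favorable configurations at the dominant scale occur often enough that the geodesic is forced out of any thinner cylinder with asymptotically positive probability.

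The main obstacle is precisely this multiscale analysis in the axis-parallel case: producing upper and lower bounds on both $\chi$ and $\xi$ that are simultaneously tight to the leading exponent is delicate, because one must argue that savings at the optimal scale $k^{\ast}$ do not get overwhelmed by the combined contributions of neighbouring scales. Once both cases are established, the failure of the scaling relation $\chi=2\xi-1$ is a one-line computation: off-axis $0\neq 2\cdot 1-1=1$, and on-axis $\chi=\xi$ lies in $(0,1/2)$ and so cannot satisfy $\chi=2\chi-1$, which would force $\chi=1$.
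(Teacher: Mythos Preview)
Your plan matches the paper's actual strategy: Lemma~\ref{naomi} and Propositions~\ref{osaka}, \ref{prop:exp3} handle the off-axis directions, and Lemmas~\ref{sunset}, \ref{thief}, \ref{coaching} handle the axis-parallel case via exactly the scale-balancing you describe. Two corrections are worth making, though. First, you cite Lemma~\ref{thief} for the off-axis variance bound, but that lemma is the axis-parallel variance estimate; in the off-axis case Lemma~\ref{naomi} alone gives $|T(\0,\slope)-|\slope|_1|\le C$, which immediately yields $\chi=0$ with no matching lower bound needed. Second, your cost accounting in the axis-parallel analysis is inverted: a vertical detour to the $k$-grid costs order $5^k$ (extra edges, each of weight at least $1$), while the \emph{savings} along the $n$ horizontal edges is of order $n\alpha^{k}$; balancing $5^{k}\asymp n\alpha^{k}$ gives your $k^{\ast}$ and the exponent $\beta$. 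Finally, in the paper the off-axis wandering result (Proposition~\ref{osaka}) is deterministic for large $n$, not merely ``with probability bounded away from $0$'', though your weaker version would still suffice for Definition~\ref{def:xi}.
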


\begin{remark}
This is the content of Lemmas \ref{thief} and \ref{coaching} for the coordinate directions
and Lemma \ref{naomi} and Propositions \ref{osaka} and \ref{prop:exp3} for the non-coordinate directions. 
%The proof of Theorem \ref{Thm3} will follow from Propositions \ref{osaka} and \ref{prop:exp3} and Lemma \ref{coaching}. 
The reader will notice that these results lead to stronger statements. In particular, we can deduce that, for $0<\lambda\leq 1$
\begin{equation*}
\lim_n ~\P(\gamma(\0,\slope)\subset \cyl(\slope,n^{1+\epsilon}))=1
\end{equation*} 
and
\begin{equation*}
\lim_n ~\P(\gamma(\0,\slope)\subset \cyl(\slope,n^{1-\epsilon}))=0
\end{equation*}  
and similarly for the coordinate directions and $\xi=0$. In particular, we believe that for our model the results of Theorem \ref{Thm3} apply to any reasonable definition of the variance and wandering exponents.

\end{remark}
%Precise statements of this results are available in 
%For the non-coordinate directions, $\chi=0$ follows directly from 
%Lemma \ref{naomi} 
%Proposition \ref{osaka} 
%Proposition \ref{prop:exp3}. 
%Lemma \ref{thief}  Lemma \ref{coaching}.

\subsection{Organization of the paper}

The rest of the paper is organized as follows. In Section \ref{sec:odometer} we define the measures $\nu_{\alpha}$ and show its main properties. The proof of Theorem \ref{four_rays} is given in Section \ref{sec:det} where the limiting shape is also determined. Section \ref{direction} is devoted to the study of the directional properties of geodesic rays and the proof of Theorem \ref{dir_rays}. 
In Sections \ref{sec:var} and \ref{sec:cor} we prove our final theorem which determines the exponents
in all directions. This is the content of Lemmas \ref{thief} and \ref{coaching} for the coordinate directions
and Lemma \ref{naomi} and Propositions \ref{osaka} and \ref{prop:exp3} for the non-coordinate directions.

\section{Construction of the measures $\{\nu_{\alpha}\}$.}\label{sec:odometer} 

In this section we construct a family of measures $\{\nu_{\alpha}\}\subset \mathcal{M}((\mathbb{R}_+)^{\E})$, indexed by a parameter $0<\alpha<0.2$. We state their main properties and study the behavior of geodesics for $FPP(\nu_{\alpha})$. 

Let $\Omega =\{0,1,2,3,4\}^{\N}$. Let $\sigma: \Omega \to \Omega$ be the 5-adic adding machine: $\sigma$ adds one (mod 5) to the first coordinate. If the result is not zero then we leave all the other coordinates unchanged. If the result is zero then we add one to the second coordinate. We repeat until we get the first non-zero coordinate. All subsequent coordinates are left unchanged. Thus 
\begin{equation*}
\sigma(0,1,2,\dots)=(1,1,2,\dots) \ \ \ \ \text{and} \ \ \ \ \sigma(4,4,2,1,\dots)=(0,0,3,1,\dots).
\end{equation*}
We also adopt the convention $\sigma(444\dots)=000\dots$

It follows that $\sigma: \Omega \to \Omega$ is uniquely ergodic with respect to the uniform measure on $\Omega$.
We use this map to form a $\Z^2$ action of $\Omega \times \Omega$ as follows. Let $L: \Omega \to \N$ given by
\begin{equation}\label{k-grid1}
L(\omega)=\min \{i:\  \omega(i)>0\}.
\end{equation}
For $(\omega_1,\omega_2) \in \Omega \times \Omega$ fixed and $v=(x(v),y(v)) \in \Zz$ we define:
\begin{equation}\label{k-grid2}
k(v,v+\mathsf{e}_1)=
L(\sigma^{y(v)}(\omega_1))
\end{equation}
and
\begin{equation*}
k(v,v+\mathsf{e}_2)=
L(\sigma^{x(v)}(\omega_2))
\end{equation*}
where $\mathsf{e}_1, \mathsf{e}_2$ are the vectors in the canonical base of $\R^2$. The following set will be referred to often in the paper, so we highlight its definition now. 
\begin{definition}\label{def:grid}
We denote the set of edges $e \in \E$ such that $k(e) \geq j$ as the $j-$grid.
\end{definition}

It is helpful to visualize the $j-$grid. Note that, by definition, these subgraphs are nested:

\begin{equation*}
1\mbox{-grid} \supseteq 2\mbox{-grid} \supseteq \dots 
\end{equation*}
Also, since edges $e$ and $e'$ in the same horizontal or vertical line satisfy $k(e)=k(e')$, it is not hard to see that for each $j$, the subgraph induced by the $j$-grid is isomorphic to $5^j\Z^2$. 

We are ready to define the measure $\nu_{\alpha}$. Let $\{X_{j,e}\}_{j \in \N, e \in \E}$ be a set of independent random variables where
$X_{j,*}$ has the uniform distribution over the interval $[0,\frac{\alpha^j(1-\alpha)}{1000}]$. We take 
\begin{equation*}
\pt_e=1+\alpha^{k(e)}+X_{k(e),e}
\end{equation*} 
where $\omega_1$ and $\omega_2$ are chosen uniformly i.i.d.  and independent of
the $\{X_{j,e}\}$. For technical reasons, we let $0<\alpha< 0.2$. 
Note that for every edge $e$
\begin{equation} \label{circulo}
1 \leq \pt_e \leq 1.3.
\end{equation}
We set $\nu_{\alpha}$ to be the resulting measure on $(\mathbb{R}_+)^{\E}$.

\begin{remark}
The measures $\nu_{\alpha}$ fall into the class of \textit{good} measures introduced in \cite{hoffman05} and \cite{H08}. 
%Also the measures have the upward finite energy property of \cite{DH14}.

We recall the definition of good measures. A measure $\P$ is good if:
\begin{itemize}
\item[(a)] $\P$ is ergodic with respect to the translations of $\Zz$.
\item[(b)] $\P$ has all the symmetries of $\Zz$.
\item[(c)] $\P$ has unique passage times.
\item[(d)] The distribution of $\P$ on an edge has finite $2+\epsilon$ moment.
\item[(e)] The limiting shape is bounded.
\end{itemize}
The construction of $\nu_{\alpha}$ is done so properties $(a)-(e)$ are easy to check.
\end{remark}

Informally, we think of a realization of $FFP(\nu_{\alpha})$ as building a series of horizontal and vertical highways on the nearest neighbor graph of $\Z^2$. 
The value of $\omega_i,~i=1,2$, determines where the origin lies with respect to these highways.
By construction, edges in the $j$-grid are faster (i.e.: have smaller passage time) than edges in any $j'$-grid  for $j>j'$. Hence, a geodesics ray is expected to follow one grid until it encounters a faster one. Then the geodesic continues along edges of the faster grid. Globally, we expect to see rays with longer segments parallel to the axes as they move away from the origin. We also suspect that the length of these horizontal or vertical segments is roughly determined by the value of the $j$-grid they are part of. We formalize this intuition in the next sections. 

\section{Structure of finite geodesics} \label{sec:det}

In this section we present several properties of geodesics in $FPP(\nu_{\alpha})$. The first lemmas describe the geometric properties of finite geodesics along vertices in the $k-$grid, recall Definition \ref{def:grid}. 

\begin{lemma}\label{square}
Let $C=C(x,y,k)$ a square of side $5^k$ with lower left vertex $(x,y)$ such that all the edges in its boundary are in the $k-$grid. Consider two vertices $v$ and $w$ in the boundary of $C$. Then
\begin{itemize}
\item[(i)] $\vgw$ is completely contained in $C$.
\item[(ii)] If $v$ and $w$ lie in the same or adjacent sides of $C$, $\vgw$ lies in the boundary of $C$.
\end{itemize}
\end{lemma}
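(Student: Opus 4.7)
The plan is to exploit two properties built into the construction of $\nu_{\alpha}$: a $k$-grid edge has weight at most $1+\alpha^{k}(1+\epsilon_{0})$ with $\epsilon_{0}=(1-\alpha)/1000$, while every edge not in the $k$-grid belongs to some $j$-grid with $j\leq k-1$ and therefore has weight at least $1+\alpha^{k-1}$. The hypothesis $\alpha<0.2$ yields $5\alpha<1$, so $(5\alpha)^{k}<1$ and $\alpha^{k-1}>5\alpha^{k}$. Moreover, because the $k$-grid is isomorphic to $5^{k}\Z^{2}$ and $\partial C$ consists of pieces of four consecutive $k$-grid lines, every edge with both endpoints in $C$ that does not lie on $\partial C$ is necessarily non-$k$-grid.

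For part (ii), let $\pi_{0}$ be the path along $\partial C$ from $v$ to $w$, going directly along the common side when $v,w$ share a side and through their shared corner when they lie on adjacent sides. Then $|\pi_{0}|=|v-w|$ and every edge of $\pi_{0}$ is in the $k$-grid, so $\Gamma(\pi_{0})\leq |\pi_{0}|+|\pi_{0}|\alpha^{k}(1+\epsilon_{0})<|\pi_{0}|+2$ because $|\pi_{0}|\alpha^{k}\leq 2(5\alpha)^{k}<2$. For any path $\pi\neq\pi_{0}$ from $v$ to $w$: if $|\pi|>|\pi_{0}|$ then parity of the $\ell_{1}$-length forces $|\pi|\geq|\pi_{0}|+2$ and hence $\Gamma(\pi)\geq|\pi|>\Gamma(\pi_{0})$; if $|\pi|=|\pi_{0}|$ then $\pi$ is a monotone path contained in the bounding rectangle of $\{v,w\}$, which sits inside $C$, and since $\pi\neq\pi_{0}$ it must use at least one non-$k$-grid interior edge of $C$, paying an excess of at least $\alpha^{k-1}-\alpha^{k}(1+\epsilon_{0})>0$ compared to the corresponding edge of $\pi_{0}$.

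For part (i), (ii) reduces the question to $v,w$ on opposite sides. Suppose a path $\pi$ from $v$ to $w$ leaves $C$ and pick a maximal outside excursion of $\pi$, namely a sub-path $E$ from $a\in\partial C$ to $b\in\partial C$ whose interior vertices lie strictly outside $C$. A case analysis based on whether $a,b$ share a side, lie on adjacent sides, or lie on opposite sides of $C$ shows that $|E|\geq |A|+2$, where $A$ denotes the $\partial C$ arc from $a$ to $b$ going around $C$ on the same side as $E$: the excursion $E$ must contain at least one edge leaving $\overline{C}$ and one re-entering it, neither of which appears in $A$. Replacing $E$ by $A$ produces a new path $\pi'$ that is at least two edges shorter on the replaced segment and whose replaced edges all belong to the $k$-grid, so the weight bound from (ii) yields $\Gamma(\pi')<\Gamma(\pi)$, contradicting $\pi=\gamma(v,w)$.

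The main technical obstacle will be verifying the $+2$ length gap in the opposite-sides sub-case for $a,b$, where $E$ must round an entire corner of $C$ from the outside; this reduces to a small geometric count comparing the outside detour with the corresponding inside arc through that corner, and uses again $5\alpha<1$ to absorb the potential savings from any $k$-grid line that $E$ may try to exploit in an adjacent fundamental square. Finally, the continuous perturbation terms $X_{k(e),e}$ guarantee almost-sure uniqueness of geodesics, upgrading the strict weight inequalities into the asserted containments $\gamma(v,w)\subset C$ and, in part (ii), $\gamma(v,w)\subset\partial C$.
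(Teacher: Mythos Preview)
Your approach is genuinely different from the paper's. You prove (ii) first and then deduce (i) by excursion surgery; the paper does the reverse, establishing (i) by a distance--based case split (does the outside excursion stay within $5^{k}$ of $C$ or not?) and then using (i) to confine the path to $C$ before comparing interior versus boundary edges for (ii). Your argument for (ii) via parity and monotone paths is clean and correct in spirit: for $|\pi|=|\pi_{0}|$ the key observation (which you state somewhat loosely as ``the corresponding edge'') is that $\pi$ and $\pi_{0}$ share an initial and final segment on $\partial C$, and on the remaining middle segment every edge of $\pi$ has weight $\geq 1+\alpha^{k-1}$ while every edge of $\pi_{0}$ has weight $\leq 1+\alpha^{k}(1+\epsilon_{0})$, so the per--edge excess is strictly positive. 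One small quantitative slip: your bound $\Gamma(\pi_{0})<|\pi_{0}|+2$ in the parity case needs $|\pi_{0}|\alpha^{k}(1+\epsilon_{0})<2$, and since $|\pi_{0}|$ can equal $2\cdot 5^{k}$ this reads $(5\alpha)^{k}(1+\epsilon_{0})<1$, which can fail (barely) for $k=1$ and $\alpha$ very close to $0.2$.

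The more substantive gap is in your treatment of (i). After reducing to $v,w$ on opposite sides you look at an outside excursion $E$ with endpoints $a,b\in\partial C$ and compare $\Gamma(E)\geq |E|$ to $\Gamma(A)\leq |A|(1+\alpha^{k}(1+\epsilon_{0}))$. For this comparison to succeed you need $|E|-|A|>|A|\alpha^{k}(1+\epsilon_{0})$. When $a,b$ lie on opposite sides of $C$, the arc $A$ on the same side as $E$ can have length up to $3\cdot 5^{k}$, so the right--hand side can be close to $3$ --- your claimed gap of $+2$ is not enough. You flag this case as the ``main technical obstacle'' but do not actually resolve it; the brief justification (``one edge leaving $\overline C$ and one re--entering'') only gives $+2$. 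A careful count in fact yields $|E|\geq |A|+6$ in that sub--case (the excursion must clear both the left and right sides of $C$ and cross row $y+5^{k}+1$ or $y-1$), which would close the gap, but this is precisely the computation you left undone. The paper sidesteps this difficulty entirely: rather than choosing an arc $A$, it observes that either the excursion stays within $5^{k}$ of $C$ (so every edge lies in at most the $(k-1)$--grid and a per--edge comparison applies) or it reaches distance $\geq 5^{k}$ from $C$ (so its edge count is at least $d+2\cdot 5^{k}$, which beats the boundary path of length $d\leq 2\cdot 5^{k}$). That dichotomy is what your surgery argument is missing.
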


\begin{proof}
We argue by contradiction. Assume there are vertices $v$ and $w$ in the boundary of $C$ such that $\vgw$ intersects the complement of $C$. Because a subpath of a geodesic is also a geodesic, we can assume that the edges of $\vgw$ lie entirely in the complement of $C$, by considering a segment of $\vgw$ completely in the complement of $C$ and taking $v$ and $w$ to be its end points. 
Let $d$ denote the length (the $\ell_1-$distance) of the shortest path along the boundary of $C$ connecting $v$ and $w$. 
 
If the maximal distance from a vertex in $\vgw$ to $C$ is less than $5^k$ then, by construction, all edges in $\vgw$ will lie on the $(k-1)-$grid at most, hence,  have passage time at least $1+\alpha^{k-1}$. Then the passage time of $\vgw$ is at least
\begin{equation*}
d(1+\alpha^{k-1})> d(1+\alpha^k)+d\alpha^k
\end{equation*}
The right hand side is an upper bound for the passage time of the path from $v$ to $w$ along the boundary of $C$. We conclude that going along the boundary of $C$ will be a shortest path from $v$ to $w$. Hence, there should be a vertex in $\vgw$ at distance at least $5^k$ of $C$. Then the passage time of $\vgw$ is at least
\begin{equation*}
2(5^k)+d
\end{equation*}
where the factor of two appears since we move away from $C$ at least $5^k$ edges and come back to $C$, crossing another $5^k$ edges. Observe that $d\leq 2(5^k)$. Hence,
\begin{equation*}
d(1+\alpha^k)+d\alpha^k=d+2d\alpha^k< d+2(5^k),
\end{equation*}
using that $2\alpha^k< 1$ as long as $\alpha<1/5$. The left hand side above is an upper bound on the passage time of a path connecting $v$ and $w$ along the boundary of $C$. This concludes the proof of part $(i)$.\\
To prove $(ii)$, assume that $v$ lies in the left side of $C$ and consider two cases for $w$.\\

{\bf Case 1: $w$ lies also on the left hand side or the horizontal sides of $C$, but it is not a corner on the right hand side.} By (i) we know $\vgw$ is contained in $C$. If $\vgw$ uses edges in the interior of $C$, we can assume, changing $v$ and $w$ if necessary, that the entire geodesic lies in the interior. This implies that all edges in $\vgw$ have passage times at least $(1+\alpha^{k-1})>1+\alpha^k+\frac{\alpha^k(1-\alpha)}{1000}\geq t_e$ for all $e$ in the boundary of $C$.  Hence, a path along the boundary will have smaller passage times, which shows that $\vgw$ lies on the boundary.

{\bf Case (2): $w$ is a corner on the right hand side of $C$.} We compare the path along the boundary of $C$ to any path $\pi$ which traverses edges in the interior of $C$. Observe that $\pi$ will have to traverse at least $5^k$ many edges horizontally, because $v$ and $w$ are on opposite sizes of $C$, and at least |y(w)-y(v)| many edges vertically. This leads to the lower bound
\begin{equation*}
\Gamma(\pi)\geq 5^k(1+\alpha^{k-1})+|y(w)-y(v)|.
\end{equation*}
A path on the boundary of $C$ connecting $v$ and $w$ has length at most:
\begin{equation*}
5^k(1+\alpha^{k})+|y(w)-y(v)|(1+\alpha^{k})+2(5)^k\frac{\alpha^k(1-\alpha)}{1000}.
\end{equation*}
The last summand is an upper bound on the sum of the random portion of the path's distance. To conclude it suffices to show that
\begin{equation*}
5^k(1+\alpha^{k-1})+|y(w)-y(v)|\geq 5^k(1+\alpha^{k})+|y(w)-y(v)|(1+\alpha^{k})+2(5)^k\frac{\alpha^k(1-\alpha)}{1000}.
\end{equation*}
This inequality is equivalent to
\begin{equation*}
2(5)^k\left(1-\alpha-\frac{\alpha(1-\alpha)}{1000}\right)\geq |y(w)-y(v)|\alpha
\end{equation*}
which follows directly since $5^k\geq |y(w)-y(v)|$ and $1-\alpha-\frac{\alpha^k(1-\alpha)}{1000}\geq \alpha$ for $0<\alpha<\frac{1}{5}$.\\
\end{proof}

\begin{corollary}\label{corner}
In the setting of Lemma \ref{square}, let $v$ and $w$ be \textit{any} vertices in the boundary. Assume that $\vgw$ visits a corner of $C$. Then $\vgw$ is completely contained in the boundary of $C$.  
\end{corollary}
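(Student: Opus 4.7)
The plan is to reduce the Corollary to Lemma \ref{square}(ii) by cutting $\vgw$ at the corner it visits. If $v$ and $w$ already lie on the same or adjacent sides of $C$, then Lemma \ref{square}(ii) directly implies that $\vgw$ lies on the boundary of $C$ and the corner hypothesis is superfluous. The substantive case is therefore when $v$ and $w$ lie on opposite sides, say $v$ on the left side and $w$ on the right side of $C$ (the top--bottom case being symmetric).

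Let $c$ be a corner of $C$ visited by $\vgw$. Because any subpath of a geodesic is itself a geodesic---otherwise one could shortcut it and obtain a strictly shorter path between $v$ and $w$---we may decompose $\vgw$ into the concatenation of $\gamma(v,c)$ and $\gamma(c,w)$. Each corner of $C$ belongs to two of its four sides, so we are free to regard $c$ as lying on whichever of these two sides is convenient. If $c$ is a left corner, then $v$ and $c$ share the left side, while $c$ may be viewed as lying on the adjoining horizontal side, which is adjacent to the right side containing $w$. If $c$ is a right corner, the symmetric statement holds with the roles of $v$ and $w$ interchanged. In either situation, both of the pairs $(v,c)$ and $(c,w)$ consist of vertices on the same or adjacent sides of $C$.

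Applying Lemma \ref{square}(ii) to each of the subgeodesics $\gamma(v,c)$ and $\gamma(c,w)$ then forces each of them to lie on the boundary of $C$, and concatenating yields that $\vgw$ itself is contained in the boundary. The argument is purely structural: the only step that draws on quantitative estimates is the invocation of Lemma \ref{square}(ii), which has already been established. I do not anticipate a substantive obstacle here; the main care needed is simply the small case analysis that matches each corner $c$ to an appropriate pair of sides so that Lemma \ref{square}(ii) is directly applicable to both subgeodesics.
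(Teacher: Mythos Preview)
Your proof is correct and follows essentially the same approach as the paper: split $\vgw$ at the visited corner and apply Lemma~\ref{square}(ii) to each subgeodesic. The paper's version is slightly more streamlined---it observes directly that a corner lies on two sides and the remaining two sides are each adjacent to one of these, so \emph{every} boundary vertex is automatically on the same or an adjacent side as the corner, making your preliminary case split (same/adjacent versus opposite sides) and the left/right corner analysis unnecessary.
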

\begin{proof}
Let $v'$ be a vertex in $\vgw$ which is in the corner of $C$. Then $v'$ is in two sides and both the other two sides are adjacent to one of these two sides. Then both the pairs $v$ and $v'$ and $v'$ and $w$ lie in (the same or) adjacent sides of $C$. Thus the corollary follows from Lemma \ref{square} applied to $\gamma(v,v')$ and $\gamma(v',w)$.
\end{proof}
We extend the result above to a large rectangle in the next lemma.

\begin{lemma}\label{rectangle}
Let $M=M(\alpha,k)$ be an integer such that $\alpha^kM>1$, and let $R=R(x,y,k)$ be a rectangle with vertices $(x,y); (x,y+5^k); (x+5^kM,y); (x+5^kM,y+5^k)$ such that all the edges in its sides are in the $k$-grid.
Let $v$ and $w$ be vertices in the boundary of $R$ such that at least one is on one of the shorter sides of $R$. If $\vgw$ is contained in $R$, then it is contained in the $k\mbox{-grid}$.
\end{lemma}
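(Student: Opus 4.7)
The plan is to use Lemma \ref{square}, applied cell by cell inside $R$, to reduce the claim to ruling out one of two ``interior crossing'' configurations, and then to derive a contradiction by replacing the crossing with a strictly cheaper $k$-grid path that hugs the boundary of $R$. Because $k$-grid rows and $k$-grid columns are both spaced $5^k$ apart in $\Z^2$, the only horizontal $k$-grid edges inside $R$ lie on its top and bottom rows, while the vertical $k$-grid edges lie on the $M+1$ columns $x+5^k j$ for $j=0,\ldots,M$. These edges partition $R$ into $M$ squares $C_0,\ldots,C_{M-1}$ of side $5^k$, each of whose boundary lies in the $k$-grid, so proving $\gamma(v,w)\subset k$-grid is the same as proving that in every $C_j$ the path stays on $\partial C_j$.

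Suppose for contradiction that some edge of $\gamma(v,w)$ is not in the $k$-grid, and let $\gamma(a,b)$ be a maximal connected subpath all of whose edges are non-$k$-grid. Then $a,b\in\partial C_j$ for some $j$; by Lemma \ref{square}(i) we have $\gamma(a,b)\subset C_j$, and Lemma \ref{square}(ii) together with Corollary \ref{corner} force $a$ and $b$ to be strictly interior to two opposite sides of $C_j$, with no corner of $C_j$ on $\gamma(a,b)$. Two configurations remain: (A) $a$ and $b$ on the left and right sides of $C_j$, or (B) $a$ and $b$ on the top and bottom of $R$. In Case (A), the only $k$-grid edges meeting $a$ at its interior height are vertical edges on the column $x+5^k j$, so $\gamma$ reaches $a$ from along that column. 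I trace backward from $a$ as long as the preceding edges of $\gamma$ stay on this column; if the trace runs into an earlier non-$k$-grid segment, I absorb that segment and continue. The procedure terminates at a vertex $A$ which is either $v$ or lies on the top or bottom row of $R$, and the symmetric forward trace from $b$ along $x+5^k(j+1)$ produces $B$ with the same property. I replace the resulting stretch $\gamma(A,B)$ by the direct $k$-grid path $\gamma'$ from $A$ to $B$ that uses the nearer of the top or the bottom of $R$ together with the two short vertical column segments at its endpoints. Case (B) is the mirror image: trace along the top and bottom (absorbing adjacent stretches) and exchange the interior vertical descent for a descent along an adjacent $k$-grid column, or, when $\gamma$ has first climbed from $v$ up to the top of $R$ and then come back down through the interior, cancel that up--across--down detour against a direct path hugging the bottom.

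The cost comparison uses \eqref{circulo} and the construction: every non-$k$-grid edge has passage time at least $1+\alpha^{k-1}$, while every $k$-grid edge has passage time at most $1+\alpha^k+\alpha^k(1-\alpha)/1000$. After the top/bottom horizontal edges common to $\gamma$ and $\gamma'$ cancel pairwise, the remaining comparison trades the at-least-$5^k$ non-$k$-grid edges of the interior crossing for a $k$-grid substitute of comparable length, against an overhead of at most a constant times $5^k$ boundary edges per stretch. The per-edge gain on the traded portion is at least $\alpha^{k-1}\bigl(1-\alpha(1+(1-\alpha)/1000)\bigr)$, so over the absorbed stretch the saving is at least $5^k\alpha^{k-1}(1-\alpha-o(1))$. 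The hypothesis $\alpha^k M>1$ combined with $\alpha<1/5$ yields $M\alpha^{k-1}>1/\alpha>5$, which is exactly the slack needed to make the cumulative saving strictly exceed the overhead whenever $\gamma'$ must run along the long side of $R$ to cover several cells. This gives $\Gamma(\gamma')<\Gamma(\gamma)$, contradicting $\gamma=\gamma(v,w)$ being a geodesic, and so $\gamma(v,w)\subset k$-grid. The delicate point is this last estimate: a purely local exchange of $\gamma(a,b)$ for a detour around $\partial C_j$ need not be strictly cheaper when $a$ and $b$ sit in the middle of a side of $C_j$, since the $O(5^k)$ overhead can eat up the gain on a single crossing; enlarging the modification to an absorbed stretch, and invoking the bound $\alpha^k M>1$, is what ultimately closes the inequality.
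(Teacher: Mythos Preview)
Your approach differs from the paper's. The paper argues by cases on the position of $w$ (with $v$ on the left short side): if $w$ lies on the boundary of $C_1$ this is Lemma~\ref{square}; if $w$ lies on a long side of $R$, an induction on $M$ reduces to the first case; and if $w$ lies on the far short side, either $\gamma(v,w)$ touches a long side of $R$ at some $v'$ and one applies the previous case to $\gamma(v,v')$ and $\gamma(v',w)$, or it does not---in which case \emph{every} horizontal edge of $\gamma(v,w)$, and there are at least $5^kM$ of them, lies off the $k$-grid, giving $\Gamma(\gamma(v,w))\ge 5^kM(1+\alpha^{k-1})$, which beats the boundary path of cost at most $5^k(M+1)\bigl(1+\alpha^k+\alpha^k(1-\alpha)/1000\bigr)$ precisely because $\alpha^kM>1$. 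The factor $M$ enters once, globally.

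Your local-modification argument has a real gap in Case~(B). A single top-to-bottom crossing $\gamma(a,b)$ contains on the order of $5^k$ non-$k$-grid edges, so the passage-time gain from swapping them for $k$-grid edges is at most of order $5^k\alpha^{k-1}=5(5\alpha)^{k-1}$, which tends to $0$ as $k\to\infty$. On the other hand, any re-route of $\gamma(a,b)$ around $\partial C_j$ can be up to $2\min(d_a,d_b)\ge 2$ edges longer than $\gamma(a,b)$ (where $d_a,d_b$ are the distances from $a,b$ to the nearer corner of $C_j$), so the length overhead alone costs at least $2$; for large $k$ this already exceeds the gain, and the local swap is not an improvement. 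Your absorbing step does not rescue this: the segments you absorb along the top and bottom rows are themselves $k$-grid edges and contribute nothing on the gain side, so the saving remains $O(5^k\alpha^{k-1})$ no matter how far you extend the stretch. The bound $M\alpha^{k-1}>5$ you invoke in the last sentence never appears on the gain side of your ledger; that factor of $M$ is exactly what the paper's global argument manufactures by forcing all $5^kM$ horizontal edges off the $k$-grid simultaneously, and it is what a crossing-by-crossing strategy forfeits. The delicate point you flag is real, but the resolution you offer does not close the inequality.
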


\begin{remark}\label{remark_rectangle}
The lemma above is still true if the largest side of the rectangle is parallel to the $y$-axis.
\end{remark}

\begin{remark}
The lemma above confirms that, once a geodesics enters a fast grid, it will not visit slower edges anymore: the only edges parallel to the $x$-axis in $\vgw$ are in the boundary of $R$. It may traverse edges in the interior of $R$ but only parallel to the $y-$axis and on the $k-$grid.
\end{remark}
\begin{proof}
To fix ideas, assume $v$ lies on the left side of $R$. Notice that $R$ can be divided into $M$ squares of side $5^k$, each satisfying the condition of Lemma \ref{square}, namely, each has boundary edges in the $k$-grid. We denote these squares by $C_1,~ C_2\dots, C_M$ from left to right. Also, for $1\leq j \leq M$ denote $v_j$ and $w_j$ the first and last vertex that $\vgw$ visits in $C_j$, respectively, when this intersection is not empty. We will again split the proof into cases.\\
{\bf Case 1: $w$ lies on the common boundary of $C_1$ and $R$.}  This is the content of Lemma \ref{square}.\\
{\bf Case 2: $w$ lies in one of the larger (horizontal) sides of $R$.} This case follows by induction on $M$, with case 1, or Lemma \ref{square},  being the initial step. Note that in this case we may traverse edges in the interior of $R$, but only in the boundary of $C_j$ for some values of $j$, thus we are still on the $k-$grid.\\
{\bf Case 3: $w$ is in the right side of $R$.}  Assume $\vgw$ visits a vertex on the horizontal sides of $R$, say $v'$. Then by {\bf case 2} applied to $\gamma(v,v')$ and $\gamma(w,v')$ we conclude that $\vgw$ is on the union of the boundaries of the $C_j$, which is a subset of the $k-$grid. If such vertex $v'$ does not exist, we deduce that all horizontal edges (i.e.: parallel to the $y-$axis) on $\vgw$ are in the interior of $R$. Then its length will be at least:
\begin{equation*}
5^kM(1+\alpha^{k-1}),
\end{equation*}
since all edges in the interior are in the $(k-1)-$grid at most. The shortest path on the boundary of $R$ connecting $v$ and $w$ has length bounded above by
\[
5^k(M+1)(1+\alpha^{k}+\frac{\alpha^k(1-\alpha)}{1000}).
\]
It can be checked that, for our choice of $M$ it holds $5^kM(1+\alpha^{k-1})\geq 5^k(M+1)(1+\alpha^{k})+5^k(M+1)(\alpha^k\frac{1-\alpha}{1000})$, which yields the desired contradiction. We have proved that $\vgw$ lies on the union of the boundaries of $C_j$, which proves the lemma.  
\end{proof}

\begin{proposition}\label{samevertex}
Let $v, w$ vertices that are end points of edges in the $k$-grid, satisfying $|v-w|\geq 5^{2k}M$, for an integer $M$ such that $\alpha^kM>1$. Then the geodesic $\vgw$ is contained in the $k$-grid.
\end{proposition}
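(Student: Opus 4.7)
The plan is to reduce Proposition~\ref{samevertex} to Lemma~\ref{rectangle} via a careful cost comparison. First, by the symmetries of $\nu_\alpha$, we may WLOG assume $w-v$ has nonnegative coordinates and that the horizontal displacement dominates, so $x(w)-x(v) \geq |v-w|/2 \geq 5^{2k}M/2$. After absorbing the factor of $2$ (e.g.\ by replacing $M$ with $\lceil M/2 \rceil$, which still satisfies $\alpha^k M > 1$ after a mild adjustment), we arrange $x(w) - x(v) \geq 5^{2k}M$. Since $v$ and $w$ are endpoints of $k$-grid edges, each lies on a $k$-grid row or column; I will describe the argument in the representative case where both lie on $k$-grid rows, the remaining cases being analogous by symmetry.

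Next I will construct a reference $k$-grid path $P$ from $v$ to $w$: travel along $v$'s $k$-grid row to a well-chosen $k$-grid column, then along that column to $w$'s row, then along $w$'s row to $w$. The path $P$ has at most $|v-w|_1 + O(5^k)$ edges, all in the $k$-grid with weights at most $1 + 1.001\alpha^k$, hence $\Gamma(P) \leq (|v-w|_1 + O(5^k))(1 + 1.001\alpha^k)$. Suppose for contradiction that $\gamma(v,w)$ uses a non-$k$-grid edge, and let $\sigma$ be a maximal non-$k$-grid sub-path of $\gamma(v,w)$ with endpoints $p_1, p_2$ on the $k$-grid. By Lemma~\ref{square}(ii), $\sigma$ must enter and exit each traversed $k$-grid cell on opposite sides (else $\sigma$ would lie on the cell boundary, contradicting its non-$k$-grid character). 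By Lemma~\ref{rectangle}, the chain of cells traversed by $\sigma$ has length strictly less than $M$; otherwise Lemma~\ref{rectangle} applied to a $5^k \times 5^kM$ rectangle spanning the chain would force $\sigma$ onto the $k$-grid. Each non-$k$-grid edge in $\sigma$ contributes at least $\alpha^{k-1}(1-\alpha)$ in extra cost over a $k$-grid alternative, and summing this excess should contradict the geodicity of $\gamma(v,w)$ when compared with $P$.

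The main obstacle is making this cost comparison rigorous, because the ``best'' $k$-grid alternative may itself detour to higher-grid rows (e.g., a $(k+1)$-grid row offering savings of order $\alpha^k(1-\alpha)$ per horizontal edge) rather than simply staying on $v$'s and $w$'s own rows. The hypothesis $|v-w|_1 \geq 5^{2k}M$---considerably larger than the $5^kM$ scale used in Lemma~\ref{rectangle}---provides precisely the slack needed to carry out such detours via $k$-grid columns (whose ``turn cost'' is only $O(5^k)$ extra horizontal edges per turn) rather than via the cheaper-looking but non-$k$-grid columns. I expect the final arithmetic to closely parallel the inequalities in the proofs of Lemmas~\ref{square} and~\ref{rectangle}, leveraging $\alpha < 0.2$ and $\alpha^k M > 1$ to ensure that the net savings of any non-$k$-grid shortcut is strictly negative, forcing $\gamma(v,w)$ to coincide in cost with a pure $k$-grid path and hence, by uniqueness of geodesics, to use only $k$-grid edges.
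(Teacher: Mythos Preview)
Your proposal is a sketch with its central step explicitly left open, and the paper's proof takes a completely different and much shorter route that avoids the difficulty you flag.

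The paper does \emph{not} compare $\gamma(v,w)$ to a globally constructed $k$-grid path. Instead it argues locally. Assume $\gamma(v,w)$ leaves the $k$-grid and let $e_m$ be the \emph{first} edge of $\gamma(v,w)$ not in the $k$-grid; let $v'$ be its endpoint that lies on the $k$-grid. Build the unique $5^k\times 5^kM$ rectangle $R^{e_m}$ with boundary in the $k$-grid, containing $e_m$, with long sides parallel to $e_m$ and with $v'$ on its boundary (hence on a short side). Now there are two cases. If $w\notin R^{e_m}$, the geodesic must exit $R^{e_m}$ at some last boundary point $w'$; apply Lemma~\ref{rectangle} directly to $\gamma(v',w')\subset R^{e_m}$ (one endpoint on a short side) to force this segment into the $k$-grid, contradicting $e_m$. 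If $w\in R^{e_m}$, then the distance hypothesis $|v-w|\geq 5^{2k}M$ guarantees $v\notin R^{e_m}$; since every edge before $e_m$ is in the $k$-grid, the path from $v$ to $v'$ must pass through a corner $c$ of the short side containing $v'$, and then Lemma~\ref{square}(ii) applied to the $5^k\times5^k$ cell at that corner gives the contradiction. That is the whole argument: the hypothesis on $|v-w|$ is used only to rule out that both $v$ and $w$ sit inside the single rectangle $R^{e_m}$.

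By contrast, your plan hinges on a global cost comparison that you yourself identify as the ``main obstacle'' and do not carry out. There are also local gaps: your use of Lemma~\ref{rectangle} to bound the length of the maximal off-grid segment $\sigma$ presupposes that $\sigma$ is contained in a single $5^k\times5^kM$ rectangle with an endpoint on a short side, neither of which you verify (a maximal off-grid arc could wander through several $k$-cells in a non-rectangular pattern). Even granting that bound, summing the per-edge excess $\alpha^{k-1}(1-\alpha)$ over $\sigma$ and comparing to your reference path $P$ is delicate, because the on-grid portions of $\gamma(v,w)$ may be cheaper than the corresponding portions of $P$ (e.g.\ by exploiting $(k{+}1)$-grid lines), and you would need to control the aggregate. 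The paper's ``first bad edge plus one rectangle'' trick sidesteps all of this.
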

\begin{proof}
Suppose that $\vgw=\{e_1,e_2,\dots, e_t\}$ contains at least one edge outside the $k-$grid. Let
\[
m=\min\{s\geq 1:~ e_s~ \mbox{ is not on the $k$-grid}\}.
\]
By definition, one endpoint of $e_m$ lies in the $k$-grid and one endpoint lies outside of it.
 Let $R^{e_m}$ be the unique rectangle defined by the following constraints:
 \begin{itemize}
 \item[(a)]  The boundary of $R^{e_m}$ is a subset of the $k-$grid. Furthermore, the length of the sizes of $R^{e_m}$ are $5^k$ and $5^kM$ for a natural number $M$ such that $\alpha^kM>1$.
 \item[(b)]  The boundary of $R^{e_m}$ intersects $e_m$.
 \item[(c)]  $R^{e_m}$ contains $e_m$.
 \item[(d)]  The larger sides of $R^{e_m}$ are parallel to $e_m$.
 \end{itemize}
 
 Note that the conditions in $(a)$ are as those in Lemma \ref{rectangle}. Since $e_m$ is the first edge on $\vgw$ not on the $k-$grid, exactly one endpoint of it is in this grid, which is the only possible intersection in condition $(b)$. We will denote this vertex by $v'$. Finally, all of the conditions  ensure that we have a unique choice of $R^{e_m}$.
 
 \begin{figure}[h]
%\sidecaption
\includegraphics[scale=.4]{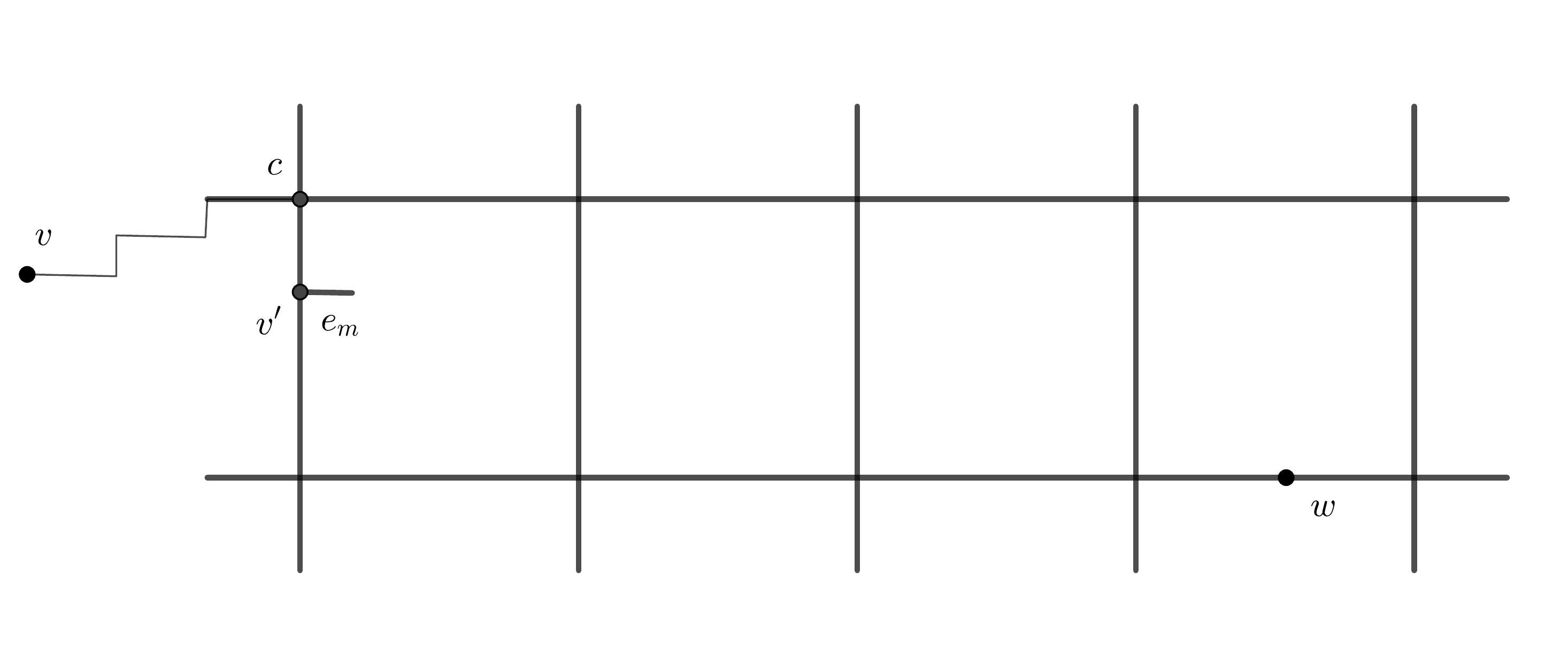}

\caption{The rectangle $R^{e_m}$ in {\bf case 2} of the proof of proposition 1: the geodesic will connect vertices $v$ and $v'$ traversing edges in the $k-$grid only, which forces it to visit a corner of $R^{e_m}$, denoted by $c$. Then the geodesics follows from $c$ to $w$ traversing $e_m$, which is not the fastest path, by Lemma \ref{rectangle}.}
\label{fig_rectangle}       % Give a unique label
\end{figure}
 
We consider two cases:\\
\textbf{Case 1:~ $w$ is in the complement of $R^{e_m}$.} Then in order to reach $w$, $\vgw$ has to exit $R^{e_m}$ for the last time at some vertex $w'$ in its boundary. Since one endpoint of $e_m$ is inside $R^{e_m}$ we have that $w'\neq v'$.  By Lemma \ref{rectangle} or Remark \ref{remark_rectangle} the geodesic from $v'$ to $w'$ is contained in the $k-$grid and hence it cannot traverse $e_m$. This contradicts our assumption.\\
\textbf{Case 2: ~ $w$ is in $R^{e_m}$.} We start making two simple observations. First, because $e_m$ is parallel to the longer sides of $R^{e_m}$, we deduce that $v'$ is on one of its shorter sides. Second, from the assumption that $v$ and $w$ are far away from each other, we conclude that $v$ is in the complement of $R^{e_m}$. By definition of $m$, all edges $e_s,~s<m$ are in the $k$-grid. Traverse $\vgw$ from $v$ until we get to $v'$. We claim that we must visit one of the corners of the side of $R^{e_m}$ that contains $v'$. To see this, simply observe that removing the two corners of such side disconnects $v'$ from $v$ in the graph induced by the $k-$grid (because $v$ is in the complement of $R^{e_m}$). Call the visited corner $c$ and let $C$ be the square of size length $5^k$ completely inside $R^{e_m}$ with one corner equal to $c$. Let $\gamma'$ be the intersection of $C$ and $\gamma(c,w)$. Note that $e_m\in \gamma'$. Now $\gamma'$ is connected, and its endvertices are $c$, a corner of $C$, and one vertex on its boundary. In the square $C$, because $c$ is a corner, it lies on the same or an adjacent side of such vertex, and thus by Lemma \ref{square} $(ii)$ we get that $\gamma'$ is completely on the boundary of $C$, which contradicts the definition of $e_m$ and finishes the proof.

%and divide $\vgw$ into two geodesics: $\gamma(v,c)$ and $\gamma(c,w)$. We will show that both geodesics are in the $k-$grid, which will finish the proof. By construction, we have that $\gamma(v,c)$ is on the $k-$grid. To show that $\gamma(c,w)$ is also on this grid, we use the fact that $w$ is in $R^{e_m}$ and on the $k-$grid. If $w$ lies on one of the longer sides of $R^{e_m}$, we can apply Lemma \ref{rectangle} or remark \ref{remark_rectangle} directly to $c$ and $w$. It rests to check the case that $w$ lies on one of the segments of length $5^k$ in the intersection of the interior of $R^{e_m}$ and the $k-$grid. But this can be verified by applying again Lemma \ref{rectangle} or remark \ref{remark_rectangle} to the rectangle $R'$ obtained by a translation of $R^{e_m}$ and such that $w$ lies on its boundary. 
\end{proof}

To prepare the ground for our next lemma, we draw a few conclusions from Proposition \ref{samevertex}. First, notice that any geodesic ray $\gamma$ will have infinitely many vertices in the $k$-grid, for all $k$. If $v_k$ is the first such vertex, it follows that \textit{all} edges in $\gamma$ after $v_k$ are in the corresponding grid. Applying the same reasoning we conclude that the intersection of $\gamma$ and the $k$-grid is an infinite connected set. The vertices $v_{k}$s  break $\gamma$ into slower edges, those with passage time of order $1+\alpha^k$, and faster edges, with passage time of order $1+\alpha^{k+1}$. We turn our attention to a set of special vertices and introduce the following definition.

\begin{definition}\label{loop}
 Let $v$ be a vertex of $\Zz$. We denote by $\region_k(v)$ the square in the plane containing $v$ with the following properties: 
 \begin{itemize}
 \item[(a)] The boundary of  $\region_k(v)$ is in the $k-$grid.
 \item[(b)] The area of $\region_k(v)$ is $5^{2k}$.
 \item[(c)] If $v$ lies in the intersection of two or more such squares, $\region_k(v)$ is the only one to the right and/or above $v$.
 \end{itemize}
 Denote by $v^k_i(v),~1\leq i\leq 4$ the corners of $\region_k(v)$, starting at the upper right and going counterclockwise. 
\end{definition}

Conditions $(a)$ and $(b)$ imply that from all bounded regions in the plane with $v$ in its interior and boundary a subset of the $k-$grid, $\region_k(v)$ is the one with smaller area. Condition $(c)$ handles the case when $v$ is in the boundary of such region. When there is no confusion, we will drop the dependence on $v$ in $\region_k$ and $v^k_i$. The importance of these vertices is explained in the next lemma.  

\begin{lemma}\label{inf_geo}
Let $\gamma$ be a geodesic ray starting at $v$. For each $k$, there is at least one value $1\leq i\leq 4$ such that $v^k_i\in \gamma$.
\end{lemma}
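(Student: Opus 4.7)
The plan is to use the infinite extent of $\gamma$ to force it, via Proposition \ref{samevertex}, onto the $k$-grid once it leaves $\region_k(v)$, and then to observe that the sparse geometry of the $k$-grid channels it through one of the corners $v^k_i$.

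Concretely, since $\gamma$ is infinite and $\region_k(v)$ is bounded, there is a first vertex $w\in\gamma$ on the boundary of $\region_k(v)$. The boundary lies in the $k$-grid, so $w$ is an endpoint of a $k$-grid edge. If $w$ coincides with some $v^k_i$ we are done, so assume $w$ lies in the interior of one of the four sides; by the symmetries available to us, it is enough to treat the case where $w$ is on the top side.

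Next, because $\gamma$ is infinite, it contains a vertex $u$ with $|u-w|\geq 5^{2k}M$ (for an integer $M$ with $\alpha^kM>1$) that is also an endpoint of a $k$-grid edge. Indeed, any infinite nearest-neighbor path in $\Z^2$ crosses infinitely many of the special horizontal lines that make up the $k$-grid, and each such crossing yields a vertex that is an endpoint of a $k$-grid edge. Applying Proposition \ref{samevertex} to $w$ and $u$, we conclude that the subpath $\gamma(w,u)\subset\gamma$ is contained entirely in the $k$-grid.

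Finally, since $w$ sits in the interior of the top side of $\region_k(v)$, it lies on a horizontal $k$-grid line but not on any vertical $k$-grid line. Therefore the only $k$-grid edges incident to $w$ are the two horizontal edges of that top line, and $\gamma(w,u)$ must begin with one of them. Because $\gamma(w,u)$ is confined to the $k$-grid, it cannot leave this horizontal line until it meets an intersection with a vertical $k$-grid line, and the two nearest such intersections are precisely the upper-right and upper-left corners $v^k_1$ and $v^k_2$ of $\region_k(v)$. Hence $\gamma$ visits one of them, proving the lemma. The main technical point to justify carefully is the existence of the far vertex $u$ on the $k$-grid; once that is in hand, Proposition \ref{samevertex} does the work and the conclusion is a clean combinatorial observation about how geodesics can move on the $k$-grid subgraph.
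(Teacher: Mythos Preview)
Your argument is correct and follows essentially the same route as the paper: pick the first vertex $\hat v$ of $\gamma$ on the boundary of $\region_k(v)$, pick a far $k$-grid vertex on $\gamma$, invoke Proposition~\ref{samevertex} to confine the subpath to the $k$-grid, and then read off that the path must pass through a corner of $\region_k(v)$. Your final combinatorial step (that a non-corner boundary point has only the two horizontal $k$-grid edges incident, forcing the path along the side to a corner) is exactly what underlies the paper's terse sentence ``the last vertex that $\gamma$ visits in $\region_k$ is one of its corners''; you have simply spelled it out.

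One small inaccuracy: your justification for the existence of the far vertex $u$ says the infinite path ``crosses infinitely many of the special horizontal lines'' of the $k$-grid. That need not be true; an infinite nearest-neighbor path can remain in a bounded horizontal strip. What is true (and what the paper uses) is that the $k$-grid contains arbitrarily large closed loops separating $v$ from infinity, so $\gamma$ must hit $k$-grid vertices arbitrarily far out---whether by crossing horizontal or vertical $k$-grid lines. With that correction your proof is complete.
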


\begin{proof}
Consider $w\in \gamma$ be a vertex in the $k$-grid such that $d(v,w)\geq 5^{3k}M$, for $M$ which was defined in the proof of Lemma \ref{rectangle}. The existence of $w$ can be deduced from the fact that the $k-$grid  is isomorphic to $5^k\Zz$ and then we can find infinite closed paths on it disconnecting $v$ from infinity. Since  $\gamma$ is an infinite path it will intersect the $k-$grid infinitely many times. Let $\hat{v}\in \region_k(v)$ denotes the first vertex in the $k$-grid that we encounter while going along $\gamma$, starting at $v$. We have $d(v,\hat{v})\leq 5^k$ and thus, by Proposition \ref{samevertex}, the subpath from $\hat{v}$ to $w$ is contained in the $k$-grid. Thus, the last vertex that $\gamma$ visits in $\region_k$ is one of its corners.
\end{proof}

We are ready to prove Theorem \ref{four_rays}.
\subsection{Proof of Theorem \ref{four_rays}.}

Assume there exits five different $\gamma_i\in \T_0$, $i\in \{1,2,3,4,5\}$. Then there is a (random) ball $B$ centered at $v$ sufficiently large such that any two of these five geodesic rays only intersect in the interior of $B$. Take $k$ large such that $\region_k=\region_k(\0)$ has its four corners in the complement of $B$. By Lemma \ref{corner} each $\gamma_i$ will visit at least one corner of $\region_k$. This contradicts the intersection property since 
$\region_k$ has four corners. This proves
\[
|\T_0|\leq 4
\]
Since $\nu_{\alpha}$ is good, it follows from \cite[Theorem 1.2]{H08} that $|\T_0|\geq 4$ and the result follows.

This result allows us to determine the shape. A direct proof of the shape is also very short.
\begin{corollary}\label{shape}
The limiting shape of $FPP(\nu_{\alpha})$ is the $\ell_1$--ball. 
\end{corollary}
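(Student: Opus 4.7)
The goal is to show $\|x\|_{\nu} = |x|_1$ for every $x \in \R^2$; then by \eqref{normB} the limiting shape $\B$ is the unit $\ell_1$-ball.

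The lower bound $\|x\|_{\nu} \geq |x|_1$ is immediate from \eqref{circulo}: since $\pt_e \geq 1$ for every edge, any lattice path from $\0$ to $\lfloor nx \rfloor$ has weight at least its edge count, which is at least $|\lfloor nx \rfloor|_1 \geq n|x|_1 - O(1)$. Dividing by $n$ and letting $n \to \infty$ gives the bound.

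For the matching upper bound, my plan is to exploit the highway structure of Section \ref{sec:odometer} directly by exhibiting a fast path. Fix $k \in \N$. By construction every edge $e$ in the $k$-grid satisfies $\pt_e \leq 1 + \alpha^k\bigl(1 + \tfrac{1-\alpha}{1000}\bigr)$, and since the $k$-grid is isomorphic to $5^k\Zz$ I can pick vertices $u, v$ of the $k$-grid within $\ell_1$-distance $2 \cdot 5^k$ of $\0$ and $\lfloor nx \rfloor$ respectively. I concatenate a shortest lattice path from $\0$ to $u$, a path from $u$ to $v$ staying entirely within the $k$-grid (obtained by moving along the horizontal $k$-grid line through $u$ to its intersection with the vertical $k$-grid line through $v$, then vertically to $v$), and a shortest lattice path from $v$ to $\lfloor nx \rfloor$. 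Bounding the first and third segments by $2 \cdot 5^k \cdot 1.3$ each using \eqref{circulo}, while the middle segment has edge count at most $|u-v|_1 \leq |nx|_1 + 4 \cdot 5^k$ with every edge of weight at most $1 + \alpha^k(1 + \tfrac{1-\alpha}{1000})$, yields
\[
T(\0, \lfloor nx \rfloor) \leq \bigl(|nx|_1 + 4 \cdot 5^k\bigr)\Bigl(1 + \alpha^k\bigl(1 + \tfrac{1-\alpha}{1000}\bigr)\Bigr) + 5.2 \cdot 5^k.
\]
Dividing by $n$, sending $n \to \infty$ with $k$ fixed, and then sending $k \to \infty$ yields $\|x\|_{\nu} \leq |x|_1$, which together with the lower bound completes the proof.

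I do not foresee any substantial obstacle; the argument is essentially a one-line construction. The two facts to check are mild: existence of the deterministic norm limit $\|x\|_\nu = \lim_n T(\0, nx)/n$ (granted by Boivin's ergodic extension of the Cox-Durrett shape theorem, applied to the good measure $\nu_{\alpha}$), and existence of $k$-grid vertices $u, v$ within the claimed $\ell_1$ distance of $\0$ and $\lfloor nx \rfloor$ (immediate since the $k$-grid is a sublattice of $\Zz$ with spacing $5^k$, so every $5^k \times 5^k$ box contains at least one).
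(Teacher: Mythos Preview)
Your proof is correct but takes a different route from the paper's. The paper derives the shape as a corollary of Theorem~\ref{four_rays}: since $|\T_0|=4$ and good measures satisfy $|\T_0|\geq sides(\B)$ by \cite[Theorem~1.2]{H08}, the shape is a polygon with at most four sides; together with the $\Zz$ symmetries this forces $\B$ to be proportional to either the $\ell_1$ or the $\ell_\infty$ ball, and the containment $\B\subset\{|x|_1\leq 1\}$ (from $\pt_e\geq 1$) plus speed~$1$ along the axes then singles out the $\ell_1$-ball. Your argument bypasses the geodesic-ray machinery entirely and works directly from the highway structure, producing an explicit path through the $k$-grid to show $\|x\|_\nu\leq |x|_1$ for every $x$. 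Your approach is more elementary and self-contained---the paper itself notes that ``a direct proof of the shape is also very short,'' and your construction is essentially the same idea that reappears later in Lemma~\ref{naomi}---while the paper's argument highlights how the shape is already forced by the geodesic structure established in the preceding section.
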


\begin{proof}
From Theorem \ref{four_rays} and  \cite[Theorem 1.2]{H08} we have that
the limiting shape of $FPP(\nu_{\alpha})$ is either proportional to the $\ell_1$--ball 
or the $\ell_\infty$--ball. As every edge has passage time at least one the limiting shape must be
contained in the $\ell_1$--ball. But the speed in the coordinate directions is one so the 
limiting shape must be the $\ell_1$--ball. 
\end{proof}

\section{Direction of the geodesic rays and proof of Theorem \ref{dir_rays}.}\label{direction}

Our goal in this section is to completely characterize $Dir(\gamma)$ for each geodesic ray $\gamma$ in $FPP(\nu_{\alpha})$. We start by combining Theorem \ref{four_rays} and recent results of Ahlberg and Hoffman \cite{AH16} to get further information about the geodesic rays. 

Throughout  this section, we will write $\region_k$ to refer to $\region_k(\0)$. Similarly, the corners of $\region_k$ will be denoted by $\{v^k_i\}_{i=1}^k$, see definition \ref{loop}. For $1\leq i\leq 4$, denote by $C_i=\{v^k_i,~k\in \N\}$ the set of corners lying in the $i$th quadrant of the coordinate plane. 

For any geodesic $\gamma$ recall that $\Dir(\gamma) \subset S^1$. In the remainder of the section we will slightly abuse notation by considering $\Dir(\gamma) \subset [0,2\pi)$.

\begin{lemma} \label{dh lemma} With $\nu_{\alpha}$ probability one the following holds: for each $1\leq i\leq 4$ there is a unique geodesic ray $\gamma_i$, starting at the origin, such that the angle $(i-\frac12)\frac{\pi}{2} \in Dir(\gamma_i)$
and $Dir(\gamma_i)$ is in the $i$th quadrant. 
\end{lemma}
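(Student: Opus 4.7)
The plan is to combine Theorem \ref{four_rays}, the directional existence result from \cite{DH14} (or its extension in \cite{AH16}), Lemma \ref{inf_geo}, and the ergodicity of the 5-adic adding machine underlying the $k$-grids.

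First, I would apply the directional existence result to each of the four linear functionals $\rho_1,\dots,\rho_4$ tangent to the four flat sides of the $\ell_1$ ball $\B$. Since $D_{\rho_i}$ defined by \eqref{def: gen_dir} is exactly the closed $i$-th quarter arc of $S^1$, this produces four geodesic rays $\gamma^{(i)}\in\T_0$ with $\Dir(\gamma^{(i)})\subset D_{\rho_i}$. The four arcs are pairwise disjoint except at the coordinate-axis points, so the $\gamma^{(i)}$ are pairwise distinct; together with $|\T_0|=4$ from Theorem \ref{four_rays}, they exhaust $\T_0$. Thus $\gamma_i:=\gamma^{(i)}$ is the unique element of $\T_0$ whose direction set lies in the $i$-th closed quadrant arc.

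Next, I would show that the midpoint direction $(i-1/2)\pi/2$ lies in $\Dir(\gamma_i)$. By Lemma \ref{inf_geo}, $\gamma_i$ contains some corner $v^k_{j(k)}$ of $\region_k$ for every $k$; the key intermediate step is to show that $\{k:j(k)=i\}$ has density $1$ in $\N$. For $j$ opposite to $i$ this $k$-set is finite, since $v^k_j/|v^k_j|$ would otherwise accumulate in the opposite arc, which is disjoint from $D_{\rho_i}$. For $j$ adjacent to $i$, the inclusion $\Dir(\gamma_i)\subset D_{\rho_i}$ forces $v^k_j/|v^k_j|$ to converge to the shared coordinate-axis point along the subsequence where $j(k)=j$, which translates to the offset of $\0$ within $\region_k$ being asymptotically on the corresponding side; by the ergodicity of the 5-adic adding machine, the set of $k$'s where this happens has density zero. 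Once the density-one claim is in hand, the same ergodic input shows that the empirical distribution of $v^k_i/|v^k_i|$ has positive density near every interior direction of the $i$-th arc, including the midpoint, so intersecting with the density-one set yields a subsequence $k_n$ with $v^{k_n}_i\in\gamma_i$ and $v^{k_n}_i/|v^{k_n}_i|$ converging to the unit vector at angle $(i-1/2)\pi/2$.

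The hardest part will be the adjacent-corner exclusion in the density-one step: the direction constraint alone only rules out the opposite quadrant, and handling the adjacent quadrants requires the explicit 5-adic ergodic structure, translating the asymptotic behavior of $v^k_j/|v^k_j|$ near the shared axis into a digit-level statement about $(\omega_1,\omega_2)$. Once that is settled, existence, uniqueness, and the midpoint direction all follow from relatively soft arguments.
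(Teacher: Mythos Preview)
Your existence-and-uniqueness step is essentially the paper's: apply the Damron--Hanson directional result to the four tangent functionals of the $\ell_1$ ball and combine with $|\T_0|=4$. One small point: your distinctness argument (``the four arcs are pairwise disjoint except at the coordinate-axis points'') does not by itself rule out $\gamma^{(i)}=\gamma^{(j)}$ for adjacent $i,j$ with common direction set equal to the single shared axis point. The paper avoids this by invoking that the four geodesics produced by \cite{DH14} have \emph{distinct asymptotic Busemann functions} (one per $\rho_i$), which forces them to be distinct as rays regardless of their direction sets.

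For the midpoint inclusion $(i-\tfrac12)\tfrac{\pi}{2}\in\Dir(\gamma_i)$ you take a genuinely different route. The paper argues softly: coalescence (from Lemma~\ref{inf_geo} and $\region_k(v)=\region_k(w)$ for large $k$) makes $\Dir(\gamma_i)$ a deterministic connected subset of the $i$th quadrant arc; reflection symmetry across the diagonal then forces $\Dir(\gamma_1)$ to meet both $[0,\pi/4]$ and $[\pi/4,\pi/2]$ with probability one, so connectivity gives $\pi/4\in\Dir(\gamma_1)$. No ergodic analysis of corner positions is needed. Your approach instead establishes the stronger density-one statement $\{k:v^k_i\in\gamma_i\}$ via equidistribution of the origin's relative position in $\region_k$, and then reads off the midpoint as an accumulation point of $v^k_i/|v^k_i|$. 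This is correct (the density-zero step for adjacent corners does follow from equidistribution, since the relative position along that subsequence is forced into arbitrarily thin strips near a side), but it is considerably more work than required here, and it essentially front-loads the content of the paper's later Lemma~\ref{corner_distribution}. What your approach buys is that it simultaneously proves the subsequent corner lemma and most of Theorem~\ref{dir_rays}; what the paper's approach buys is a two-line symmetry argument that isolates exactly what is needed for this lemma.
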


\begin{proof}
For each quadrant there is a linear function $\rho_i$ whose level set 
$D_{\rho_i}(z)=1$ (see equation \eqref{def: gen_dir} for the definition of $D_{\rho}$) is the intersection of the boundary of the $\ell_1-$ball with the $i$th quadrant.
By Theorems 1.11 and 4.6 of \cite{DH14} for each $i$ there is a 
geodesic whose Busemann function is asymptotically linear with growth rate $D_{\rho_i}$
and whose $Dir(\gamma_i)$ is contained in the $i$th quadrant.
As there are only four geodesics a.s.,\  these geodesics are unique.
We denote by $\gamma_i$ the only geodesic ray directed on the $i$th quadrant.

For any $v,w \in \Z^d$ we have for all $k$ sufficiently large that 
$\region_k(v)=\region_k(w)$. Thus by Lemma \ref{inf_geo} we have that the geodesics are coalescing.
Thus $Dir(\gamma_i)$ is an almost sure\ invariant subset of $S_i$.
Either 
\begin{equation*}
\P(Dir(\gamma_1) \cap [0,\pi/4] \neq \emptyset) \geq 0
\end{equation*}
or
\begin{equation*}
\P(Dir(\gamma_1) \cap [\pi/4,\pi/2] \neq \emptyset) \geq 0.
\end{equation*}
By symmetry they must both be greater than zero. By shift invariance they both must have probability
one. As $\Dir(\gamma_1)$ is connected subset of $[0,\pi/2]$ then $\pi/4 \in \Dir(\gamma_1)$. 
The same argument works for the other three quadrants.
\end{proof}

\begin{lemma}
With $\nu_{\alpha}$ probability one it holds that
$v^k_i \in \gamma_i$   
for $1\leq i\leq 4$ and for all but finitely many $k$. 
\end{lemma}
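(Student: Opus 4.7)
\emph{Plan.} The plan is to combine four already-established facts: Theorem \ref{four_rays} (four distinct rays $\gamma_1,\ldots,\gamma_4$), Lemma \ref{inf_geo} (each $\gamma_i$ visits a corner of $\region_k$), Lemma \ref{dh lemma} ($\Dir(\gamma_i)\subset$ closed $i$th quadrant), and planarity of $\Zz$. Writing $v_1^k=(p_k,q_k)$ with $p_k,q_k\in(0,5^k]$, a short Borel--Cantelli argument on the base-$5$ digits of $\omega_1,\omega_2$ shows that a.s.\ all four of $p_k,\,q_k,\,5^k-p_k,\,5^k-q_k$ tend to infinity. Since the pairwise intersections $\gamma_i\cap\gamma_j$ ($i\neq j$) form a finite set of edges, for all large $k$ the nested square $\region_k$ contains every pairwise intersection and all four corners $v_j^k$ lie outside them. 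Combining Lemma \ref{inf_geo} with pairwise disjointness of the $\gamma_i$ outside that finite set forces each ray to visit exactly one corner, giving a bijection $\sigma_k\colon\{1,2,3,4\}\to\{1,2,3,4\}$ with $v_{\sigma_k(i)}^k\in\gamma_i$.

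Next I restrict $\sigma_k$ via planarity. Outside the fixed finite set, the four $\gamma_i$ are pairwise-disjoint planar paths going to infinity, and by Lemma \ref{dh lemma} they head off into the four quadrants; in counterclockwise cyclic order around infinity they appear as $\gamma_1,\gamma_2,\gamma_3,\gamma_4$, and for large $k$ the same cyclic order is induced on $\partial\region_k$, a Jordan curve containing the pairwise intersections and whose counterclockwise corner sequence is $v_1^k,v_2^k,v_3^k,v_4^k$. Matching these forces $\sigma_k$ to be a cyclic shift of $\{1,2,3,4\}$, leaving only the three non-identity shifts to eliminate.

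Finally I apply the directional constraint. Suppose a given non-identity shift occurs for $k$ in an infinite set $K$; pass to a sub-sequence along which all four of $v_j^k/|v_j^k|$ converge in $S^1$. Since $v_{\sigma_k(i)}^k\in\gamma_i$ and $|v_{\sigma_k(i)}^k|\to\infty$, each limit $\lim v_{\sigma_k(i)}^k/|v_{\sigma_k(i)}^k|$ lies in both $\Dir(\gamma_i)\subset$ closed $i$th quadrant and in the closed $\sigma_k(i)$th quadrant. The shift by $2$ fails immediately, as opposite closed quadrants have empty intersection in $S^1$. For the shifts by $1$ and $3$, each $\sigma_k(i)$ is adjacent to $i$, and the limit must be the common coordinate axis; expanding these four limits translates them into asymptotic conditions on the ratios among $p_k,\,q_k,\,5^k-p_k,\,5^k-q_k$. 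The main obstacle here is the final bookkeeping, because any single such condition is compatible with generic $\omega$ and one really does need to combine all four at once; concretely, for shift by $1$ one obtains both $q_k/5^k\to 0$ (from the conditions forced on $\gamma_1,\gamma_4$) and $q_k/5^k\geq 1/2$ eventually (from those forced on $\gamma_2,\gamma_3$), and analogously for shift by $3$. With all three non-identity cyclic shifts ruled out, $\sigma_k$ is the identity for all but finitely many $k$, completing the proof.
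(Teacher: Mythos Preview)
Your argument is correct, and the overall scaffold matches the paper's: establish via Lemma~\ref{inf_geo} and eventual disjointness that for large $k$ each $\gamma_i$ picks out a distinct corner, then use the directional information from Lemma~\ref{dh lemma} to pin down which. You also make explicit the planarity step (that $\sigma_k$ must be a cyclic shift of $\{1,2,3,4\}$), which the paper leaves tacit in its sentence ``From this we can conclude that for all other $j \neq i$\ldots''.

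Where your proof diverges is in the endgame. The paper's shortcut is the pigeonhole observation that for \emph{every} $k$ at least one corner $v^k_i$ has both coordinates at least $5^k/2$ in absolute value, so its direction lies strictly inside the $i$th quadrant (at angular distance $\geq .1$ from either axis); since far-out vertices of $\gamma_j$ have direction within $.01$ of the $j$th quadrant, this forces $\sigma_k(i)=i$ for that particular $i$. A cyclic shift with a fixed point is the identity, and one is done. You instead suppose a nontrivial shift occurs infinitely often, pass to a subsequence, and extract four simultaneous axial-limit conditions whose combination is contradictory. This works (e.g.\ for one shift direction, $\gamma_4$'s constraint gives $q_k=o(p_k)=o(5^k)$ while $\gamma_1$'s then gives $5^k-p_k=o(q_k)=o(5^k)$, but $\gamma_3$'s gives $p_k=o(5^k-q_k)=o(5^k)$, a contradiction), though your stated form ``$q_k/5^k\to 0$ and $q_k/5^k\geq 1/2$'' is not quite the pair that drops out --- you need to chain two of the four conditions to reach each side. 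The paper's one-line pigeonhole is the cleaner finish and avoids this bookkeeping entirely.
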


\begin{proof}
For each $k$ there exists an $i$ such that both coordinates of $v^k_i$ are at least $5^k/2$ in absolute value. For such values of $k$ and $i$, we have
$Dir(v^k_i) \in (i-1)\pi/2 +(.1, \pi/2-.1)$.
Let $K$ be large enough such that for each $k>K$ we have that  $v^k_i$ for each $i$ is in a distinct geodesic (the existence of such $K$ follows from Theorem \ref{four_rays} and Lemma \ref{inf_geo}).
Also, for any $i$ and any vertex $v \in \gamma_i$ such that $|v| \geq \min_j|v^k_j|$ we have $Dir(v) \in (i-1)\pi/2+(-.01,\pi/2+.01)$. 
Then for this particular $i$ we have that $v^k_i \in \gamma_i$. From this we can conclude that for all other  $j \neq i$
we have that $v^k_j \in \gamma_j$ as well.
\end{proof}

\begin{lemma}\label{origin_position}
Let $\omega_1, \omega_2\in \Omega$  be sampled uniformly i.i.d.. The position of the origin in the interior of $\region_k$ is completely determined by the first $k$ entries of $\omega_i$, $i=1,2$.
\end{lemma}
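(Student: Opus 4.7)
The plan is to show that the $k$-grid structure in a neighborhood of $\0$ depends only on $\omega_1 \bmod 5^k$ and $\omega_2 \bmod 5^k$, each of which is by definition a function of the first $k$ entries of the respective sequence. The argument is essentially an unpacking of the definitions once we invoke the $5$-adic identification of $\Omega$.

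First, identify each $\omega=(a_0,a_1,\dots)\in\Omega$ with the $5$-adic integer $\sum_{i\ge 0} a_i\, 5^i$. Under this identification $\sigma$ is addition by $1$, so $\sigma^n$ is addition by $n$. By the definition \eqref{k-grid1} of $L$, the condition $L(\sigma^n(\omega))\ge k$ is equivalent to requiring that the first $k$ coordinates of $\sigma^n(\omega)$ vanish, i.e.\ $\omega+n\equiv 0\pmod{5^k}$. In particular, the event $\{L(\sigma^n(\omega))\ge k\}$ depends on $\omega$ only through $\omega\bmod 5^k$, which is a function of $(a_0,\dots,a_{k-1})$ alone.

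Plugging this into \eqref{k-grid2}, a horizontal edge at row $y$ belongs to the $k$-grid iff $y\equiv -\omega_1\pmod{5^k}$, and a vertical edge at column $x$ belongs to the $k$-grid iff $x\equiv -\omega_2\pmod{5^k}$. Setting $r_i:=\omega_i\bmod 5^k$, the rows supporting $k$-grid horizontal edges form the arithmetic progression $\{-r_1+j\cdot 5^k:j\in\Z\}$, and analogously for columns via $r_2$. Hence the placement of every $k$-grid line in $\Z^2$ is determined by $(r_1,r_2)$.

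Finally, Definition \ref{loop} characterizes $\region_k(\0)$ as the axis-aligned $5^k\times 5^k$ square with boundary in the $k$-grid that contains $\0$, with the tiebreaker (c) selecting the square to the right and/or above $\0$ when $\0$ sits on a grid line. Given $(r_1,r_2)$, the bottom edge of $\region_k(\0)$ sits at row $-r_1$ when $r_1>0$ and at row $0$ when $r_1=0$; similarly the left edge is the column $-r_2$ (or $0$). Therefore $\region_k(\0)$, and with it the position of $\0$ inside it — namely the offset $(r_2,r_1)$ from the lower-left corner — is a deterministic function of $(r_1,r_2)$, hence of the first $k$ entries of $\omega_1$ and $\omega_2$. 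There is no genuine obstacle here; the only care required is the boundary case where $\0$ lies on a grid line, which is resolved explicitly by condition (c) of Definition \ref{loop}.
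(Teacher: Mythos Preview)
Your proof is correct and takes a genuinely different route from the paper. The paper argues by induction on $k$: for $k=1$ it matches the $25$ lattice points of $\region_1$ with the $25$ values of $((\omega_1)_1,(\omega_2)_1)$, and for the inductive step it subdivides $\region_k$ into $25$ sub-squares of side $5^{k-1}$, shows that $((\omega_1)_k,(\omega_2)_k)$ picks out which sub-square contains $\0$, and then invokes the hypothesis. Your argument instead exploits the $5$-adic identification directly: once one observes that $L(\sigma^n\omega)\ge k$ is the congruence $\omega+n\equiv 0\pmod{5^k}$, the grid lines and hence the lower-left corner of $\region_k(\0)$ are read off explicitly as functions of $r_i=\omega_i\bmod 5^k$, and the offset $(r_2,r_1)$ drops out with no induction. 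What your approach buys is an explicit formula for the position (and a transparent explanation of \emph{why} only the first $k$ digits matter); what the paper's induction buys is that it never needs to name the $5$-adic structure and stays closer to the combinatorial picture of nested squares. One small caution: your indexing of $\omega$ from $0$ and the accompanying claim ``$L\ge k$ iff the first $k$ coordinates vanish'' differs by one from the paper's indexing (which writes $(\omega_i)_1$ for the first entry), so you should align the convention when splicing this in; the substance of the argument is unaffected.
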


\begin{remark}
Lemma \ref{origin_position} can be interpreted as follows: a realization of $\omega_1, \omega_2$ determines which edges are in the $k-$grid via equations \eqref{k-grid1} and \eqref{k-grid2}, and thus determines $\region_k=\region_k(\0)$. This region is a square of size $5^k$ and by definition the origin is one of the $5^{2k}$ vertices in it that do not lie on the top or right sides (see definition \ref{loop}). The lemma above tells us that it is enough to know the first $k$ coordinates of $\omega_i,~i=1,2$ to determine which of those $5^2k$ vertices is the origin. Note that, to know the region $\region_k$ itself we need to know all coordinates of $\omega_i~i=1,2$. 
\end{remark}
\begin{proof} 
We will prove the lemma by induction on $k$. Let $\{e_j\}$ be the canonical base of $\Omega$. The entries of $e_j$ satisfies: $(e_j)_k=\delta_{\{k=j\}}$. 

For $k=1$, there are $25$ possible positions of the origin within $\region_1$ . Assign to each of those vertices a pair $(a,b)$ given by the distance from it to the bottom side and left side of $\region_1$, respectively. This is a surjective map from the set of vertices in $\region_1$ and $\{0,1,2,3,4\}^2$. We can check now that the origin the vertex with label $(a,b)$ if and only if: $(\omega_1)_1=a$ and $(\omega_2)_1=b$. This proves the initial case. To prove the general case, consider $\region_k$ divided into $25$ squares of side $5^{k-1}$. We will prove next that the pair $((\omega_1)_k,(\omega_2)_k)$ is enough to determine in which of these squares the origin is. To see this, we argue similarly to the case $k=1$. Notice that each of the $25$ squares can be encode by a pair $(a,b)$ given by the distance to the bottom and left side of $\region_k$, respectively. We can check that the origin lies in the square labeled $(a,b)$ if and only if $(\omega_1)_k=a$ and $(\omega_2)_k=b$. Using the induction hypothesis the proof will follow. 
\end{proof}

\begin{lemma}\label{corner_distribution}
Denote by $\theta^k_1$ the argument of $v^k_1$.
 Fix $\theta\in (0,\pi/2)$. For any $\epsilon>0$ there are infinitely many values of $k$ such that

$$|\theta-\theta^k_1|< \epsilon.$$
\end{lemma}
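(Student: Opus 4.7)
My plan is to express $\theta_1^k$ as a continuous function of the normalized position $(a_k/5^k,b_k/5^k)$ of the origin inside $\region_k$, identify this pair with two base-$5$ numbers built from the prefixes of $\omega_1$ and $\omega_2$, and then apply the second Borel--Cantelli lemma to find infinitely many $k$ at which the prefixes match a prescribed target.

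First I would unpack the geometry. By Definition \ref{loop}, the origin sits at a lattice point $(b_k,a_k)\in\{0,\ldots,5^k-1\}^2$ inside $\region_k$ measured from its lower-left corner, so the upper-right corner satisfies $v_1^k=(5^k-b_k,\,5^k-a_k)$ in $\Z^2$-coordinates and
\[
\tan\theta_1^k=\frac{1-a_k/5^k}{1-b_k/5^k}.
\]
Iterating the one-step identification in the proof of Lemma \ref{origin_position}---at each scale, the digits $((\omega_1)_k,(\omega_2)_k)$ pick out which of the $25$ sub-squares of $\region_k$ is $\region_{k-1}$---yields
\[
\frac{a_k}{5^k}=\sum_{j=1}^{k}(\omega_1)_j\,5^{\,j-1-k},\qquad \frac{b_k}{5^k}=\sum_{j=1}^{k}(\omega_2)_j\,5^{\,j-1-k}.
\]
Read as base-$5$ expansions, this says the first $N$ digits of $a_k/5^k$ are $(\omega_1)_k,(\omega_1)_{k-1},\ldots,(\omega_1)_{k-N+1}$, and the analogous statement holds for $b_k/5^k$ and $\omega_2$.

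Given $\theta\in(0,\pi/2)$ and $\epsilon>0$, I then choose targets $u,v\in[0,1)$ satisfying $(1-u)/(1-v)=\tan\theta$, for instance $(u,v)=(1-\tan\theta,0)$ when $\tan\theta\le 1$ and $(u,v)=(0,1-\cot\theta)$ otherwise. Since $u,v<1$, the map $(x,y)\mapsto\arctan((1-x)/(1-y))$ is continuous at $(u,v)$, so there exists $\delta>0$ such that $|a_k/5^k-u|<\delta$ and $|b_k/5^k-v|<\delta$ together force $|\theta_1^k-\theta|<\epsilon$. Choose $N$ with $5^{-N}<\delta$ and let $u_1\ldots u_N$, $v_1\ldots v_N$ denote the initial base-$5$ digits of $u$ and $v$. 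Then the digit-matching event
\[
E_k=\bigl\{(\omega_1)_{k-j+1}=u_j\ \text{and}\ (\omega_2)_{k-j+1}=v_j\ \text{for all } 1\le j\le N\bigr\}
\]
is sufficient to guarantee $|\theta_1^k-\theta|<\epsilon$. Restricting to $k\in\{N,2N,3N,\ldots\}$ makes the $E_k$'s involve disjoint windows of the i.i.d.\ uniform sequences $\omega_1,\omega_2$ under $\nu_\alpha$, so they are independent with $\P(E_k)=5^{-2N}>0$; the second Borel--Cantelli lemma then gives $\P(E_k\text{ i.o.})=1$, and the lemma follows.

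The only real obstacle is bookkeeping: one must verify carefully that the recursion in Lemma \ref{origin_position} produces the displayed base-$5$ formulas for $a_k/5^k$ and $b_k/5^k$ rather than a reversed, shifted, or coordinate-swapped variant. Once this is checked, matching arbitrary finite prefixes of $u$ and $v$ to the relevant windows of $\omega_1,\omega_2$ is a routine application of the second Borel--Cantelli lemma.
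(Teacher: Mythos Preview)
Your proposal is correct and follows the same strategy as the paper: identify the position of the origin inside $\region_k$ with a digit pattern in $(\omega_1,\omega_2)$, prescribe a finite digit window to force $\theta_1^k$ close to $\theta$, and invoke the second Borel--Cantelli lemma along a sparse subsequence of $k$'s so the windows are disjoint and hence independent. The paper carries this out only in the special case $\theta=0$ (setting the top $t$ digits of $\omega_1$ to $4$ and of $\omega_2$ to $0$) and leaves the general $\theta$ implicit, whereas you work out the explicit base-$5$ formula $a_k/5^k=\sum_{j=1}^k(\omega_1)_j5^{j-1-k}$ and treat all $\theta\in(0,\pi/2)$ uniformly via continuity; your version is also more careful about the independence hypothesis in Borel--Cantelli.
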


\begin{proof}
We will do the case $\theta=0$. We want to show that infinitely many $v^k_1$ are inside the cone bounded by the lines $\theta=0$ and $\theta=\epsilon$. Let $t$ be a natural number such that $\frac{5^{-t}}{1-5^{-t}}<\tan(\epsilon)$. For large values of $k$, denote by $E_k$ the event:
\[
(\omega_1)_{k-t+1}=(\omega_1)_{k-t+2}=\cdots=(\omega_1)_{k}=4
\]
\[
(\omega_2)_{k-t+1}=(\omega_2)_{k-t+2}=\cdots=(\omega_2)_{k}=0.
\]
In words, this corresponds to $t$ coordinates been simultaneously equal to $0$ and $4$ in $\omega_1$ and $\omega_2$, respectively. If follows by Borel-Cantelli that $\{E_k\}$ happens infinitely often. By Lemma \ref{origin_position}, this event corresponds to the origin being in the top left $5^{k-t}$ square in $\region_k$. Then
\[
0<\theta^k_1\leq \arctan\left(\frac{5^{k-t}}{5^k-5^{k-t}}\right)<\epsilon.
\]  
which completes the proof.
\end{proof}

\subsection{Proof of Theorem \ref{dir_rays}.}

Let $\rho$ be a functional tangent to the $\ell_1$--ball. Associate to $\rho$ a set $C_i$ of corners, in the natural way. By Lemma \ref{dh lemma} there is a unique geodesic ray, $\gamma_i$, with the property that $Dir(\gamma_i)\subset (i-1)(\pi/2)+(0,\pi/2)$. By Lemma \ref{corner_distribution} we can find points  $u\in Dir(\gamma_i)$ as close as we want to the endpoints of $(i-1)(\pi/2)+(0,\pi/2)$. Since $Dir(\gamma_i)$ is connected we conclude that $(i-1)(\pi/2)+(0,\pi/2)\subset Dir(\gamma_i)$. It follows now that $(i-1)\pi/2+[0,\pi/2]=Dir(\gamma_i)$.

\section{Exponents in non-coordinate directions}\label{sec:var}

The next two sections are devoted to the proof of Theorem \ref{Thm3}. We start by showing that $T(x,y)$ is well concentrated. Denote the origin by $\0$.

\begin{lemma} \label{naomi}
Let $0<\lambda\leq 1$ be a fixed constant and $\slope=(n,\lambda n)$. There exists a  constant $C=C(\lambda)$ such that
\begin{equation*}
|\slope|_1\leq T(\0,\slope)\leq |\slope|_1+C.
\end{equation*}
\end{lemma}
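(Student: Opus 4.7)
The lower bound is immediate from \eqref{circulo}: every edge has weight at least $1$, and any lattice path from $\0$ to $\slope$ uses at least $|\slope|_1$ edges, so $T(\0,\slope)\ge|\slope|_1$.

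For the upper bound, I will exhibit an explicit monotone path $\pi^*$ from $\0$ to $\slope$ built hierarchically through the grid levels, and bound $\Gamma(\pi^*)$ directly. The hierarchical construction is viable because $5\alpha<1$ makes the "cost per grid level" summable.

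Fix $K=K(n)$ with $K\approx \log n/|\log\alpha|$ so that $|\slope|_1\,\alpha^K=O(1)$. Since $\alpha<1/5$ gives $\log 5/|\log\alpha|<1$, the $K$-grid spacing is $5^K=o(|\slope|_1)$ for $n$ large. I build $\pi^*$ in three monotone pieces. \emph{Ramp-up}: set $v_1=\0$ and, inductively for $k=2,\ldots,K$, pick a $k$-grid vertex $v_k\ge v_{k-1}$ (coordinatewise) with $|v_k-v_{k-1}|_1=O(5^{k-1})$; such $v_k$ exists because the $k$-grid rows and columns each form an arithmetic progression of common difference $O(5^{k-1})$, by the 5-adic structure of $\sigma$. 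Connect $v_{k-1}$ to $v_k$ by an $L$-shaped walk along the $(k-1)$-grid row and column through $v_{k-1}$, so each edge in this step has weight at most $1+2\alpha^{k-1}$. \emph{Main leg}: from $v_K$, walk in an $L$-shape on the $K$-grid to a $K$-grid vertex $u'\ge v_K$ with $|\slope-u'|_1=O(5^K)$; every edge has weight at most $1+2\alpha^K$. \emph{Ramp-down}: run the ramp-up in reverse from $u'$ to $\slope$.

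Because all three pieces move monotonically toward $\slope$, $\pi^*$ has exactly $|\slope|_1$ edges. Summing the excess weight $\pt_e-1$ edge by edge:
\begin{align*}
\Gamma(\pi^*)-|\slope|_1 &\le 4\sum_{k=2}^{K}\alpha^{k-1}\cdot O(5^{k-1}) + 2\alpha^K\,|\slope|_1 \\
&\le C_1\sum_{k\ge 1}(5\alpha)^k + C_2,
\end{align*}
which is bounded uniformly in $n$: the geometric series converges since $5\alpha<1$, and the last term is $O(1)$ by the choice of $K$.

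\textbf{The main obstacle} is arranging the ramp to be both inductive and monotone. At each level $k$ we need a $k$-grid row \emph{and} a $k$-grid column in the forward direction from $v_{k-1}$ within distance $O(5^{k-1})$; this reduces to the explicit periodicity of $\{y:L(\sigma^y\omega_1)\ge k\}$ as an arithmetic progression, which follows from the construction of $\sigma$ as the $5$-adic odometer (and symmetrically for columns). Finally, for the values of $n$ so small that $5^K$ is not much smaller than $|\slope|_1$, the trivial estimate $T\le 1.3\,|\slope|_1$ yields $T\le|\slope|_1+C'(\lambda,\alpha)$ directly, and this exceptional range is absorbed into the final constant $C$.
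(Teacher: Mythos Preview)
Your proof is correct and follows essentially the same approach as the paper's: both construct a monotone ``staircase'' path that ramps up through successive grid levels near $\0$, crosses on a high-level grid, and ramps down near $\slope$, then bound the excess over $|\slope|_1$ by the convergent geometric series $\sum(5\alpha)^k$. The only cosmetic difference is the choice of top level --- the paper stops at the geometric level $N$ where $\region_N(\0)$ and $\region_N(\slope)$ first overlap (so $n\le(2/\lambda)5^N$), whereas you stop at the analytically chosen $K$ with $n\alpha^K=O(1)$; both choices make the middle-leg excess $O_\lambda(1)$.
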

\begin{proof}
The lower bound follows from the fact that $t_e\geq 1$. For the upper bound, we construct a path from $\0$ to $\slope$ satisfying the desired inequality. Consider the squares $\{\region_k(\0)\}$ and  
$\{\region_k(\slope)\}$ for $1\leq k\leq N-1$ where 
$N$ is the minimum $t$ such that the projections of 
$\region_t(\0)$ and $\region_t(\slope)$
onto either the $x$ or $y$ axes have nonempty intersection.
Note that this definition implies that $n \leq (2/\lambda) 5^N.$ 
 
Next, we choose a corner in $\{\region_k(\0)\}$ for each $k$ that is closest to $\slope$, and similarly we choose a corner in $\{\region_k(\slope)\}$ closest to $\0$. We have a sequence of vertices:
\begin{equation*}
\0, v_1, v_2,\dots, v_{N-1}, w_{N-1},\dots, w_2, w_1, \slope
\end{equation*}
where $v_i, w_i$ are the corners chosen above in $\region_i(\0)$ and $\region_i(\slope)$, respectively. Our path is the concatenation of paths joining consecutive vertices on this sequence such that the path connecting $v_i$ and $v_{i+1}$ (and $w_{i+1}$ and $w_i$) is on the $i-$grid, and the path joining $v_{N-1}$ and $w_{N-1}$ is on the $(N-1)-$grid. All of these subpaths are taken to have the minimal possible number of edges.

%Consider the north-east directed paths on the appropriate $j$-grid between consecutive vertices on this sequence. Our path from $\0$ to $\slope$ is the concatenation of these geodesics. 
Note that for each $i$, all edges in the subpaths from $v_i$ to $v_{i+1}$ have passage times bounded by $1+2\alpha^{i}$. Also, we cross at most $2\cdot5^{i+1}$ many edges between $v_i$ and $v_{i+1}$. An analogous analysis extends to the vertices $w_j$. 
Between $v_{N-1}$ and $w_{N-1}$ we have at most $2n$ edges with weights at most 
$1+2\alpha^{N-1}$. The total length of our path is at most $|\slope|_1$.
We put this together to conclude that
\begin{equation*}
T(\0,\slope)
\leq |\slope|_1+20\sum_{j=1}^{\infty} 5^j\alpha^j +n 2\alpha^N
\leq |\slope|_1+C' +(2/\lambda)5^N 2\alpha^N
=|\slope|_1+ C
\end{equation*}
as $\alpha<1/5$.
\end{proof}

\subsection{The wandering exponent.}\label{sec:wand}

\begin{proposition}\label{prop:exp2} \label{osaka}

Let $0<\lambda \leq 1$. 
Let $\cyl(\slope,cn)$ 
be the set of all points within
distance $cn$ of the line segment connecting $\0$ and $\slope=(n,\lambda n)$.
There exists $c=c(\lambda)>0$ such that for all $n$ sufficiently large $\geo(\0,(n,\lambda n))$ is not contained in 
$\cyl(\slope,cn)$.

\end{proposition}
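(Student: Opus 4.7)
My plan is to identify a specific high-level grid corner that $\gamma(\0,\slope)$ must pass through, and then to show that this corner generically lies at Euclidean distance $\Omega(n)$ from the segment $\0\slope$.

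Let $N=N(n)$ be the smallest integer with $\slope\in\region_N(\0)$; then $n\leq 5^N\leq 5n$ and the sub-region $\region_{N-1}(\0)$ has side $5^{N-1}\in[n/5,n]$. The first step is to show that $\gamma(\0,\slope)$ passes through the upper-right corner $v^{N-1}_1$ of $\region_{N-1}(\0)$. Proposition~\ref{samevertex} does not directly apply at this scale (its hypothesis $|v-w|\geq 5^{2(N-1)}M\sim n^2$ fails since $|\slope|\sim n$), so I expect to obtain the corner-visit by combining: (a) containment of $\gamma$ inside $\region_N(\0)$, which follows from a Lemma~\ref{square}--style argument because a detour outside $\region_N(\0)$ would cost at least $2\cdot 5^N\sim n$ extra edges while the total savings from accessing any higher-level grid outside is only $O(n^{1-\beta})=o(1)$ (here $\beta=\log_5(1/\alpha)>1$); and (b) the observation underlying Corollary~\ref{corner} that any near-minimal monotone path exiting $\region_{N-1}(\0)$ toward $\slope$ must do so through a corner of $\region_{N-1}(\0)$, which together with $\slope$ being up-and-right of $\0$ forces that corner to be $v^{N-1}_1$.

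The second step is a probabilistic calculation in the spirit of Lemma~\ref{corner_distribution}. By Lemma~\ref{origin_position} the position of $\0$ within $\region_{N-1}(\0)$, and hence the coordinates of $v^{N-1}_1$ in $[0,5^{N-1}]^2$, are essentially uniformly distributed on $\{0,\ldots,5^{N-1}-1\}^2$ and depend only on the first $N-1$ coordinates of $\omega_1,\omega_2$. Writing $v^{N-1}_1=(a_n,b_n)$, elementary geometry furnishes constants $\kappa=\kappa(\lambda)>0$ and $p=p(\lambda)>0$ such that
\begin{equation*}
\P\bigl(|b_n-\lambda a_n|\geq \kappa\cdot 5^{N-1}\bigr)\geq p
\end{equation*}
for all large $n$. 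On this event the point $v^{N-1}_1$ lies outside $\cyl(\slope,cn)$ as soon as $c<\kappa/(5\sqrt{1+\lambda^2})$, and combined with Step~1 this gives the proposition with probability at least $p$ at each such $n$.

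To upgrade from ``with probability $\geq p$'' to ``for all $n$ sufficiently large, almost surely'', I exploit the product structure of $\nu_\alpha$. Along a sparse subsequence $n_k$ chosen so that the scales $N(n_k)$ are distinct, the corner-position events from Step~2 depend on disjoint blocks of $\omega_1$- and $\omega_2$-coordinates and are therefore independent; Borel--Cantelli then delivers the claim along this subsequence, after which ergodicity (as invoked in Lemmas~\ref{dh lemma} and~\ref{corner_distribution}) fills the gaps between consecutive $n_k$'s. The main technical obstacle is Step~1: rigorously justifying that $\gamma(\0,\slope)$ really visits $v^{N-1}_1$ at the critical scale $5^{N-1}\sim n$, which will require replaying the proof of Proposition~\ref{samevertex} inside $\region_N(\0)$ --- treating that square as the effective universe of linear size $n$ --- and using the upper bound $T(\0,\slope)\leq|\slope|_1+C$ from Lemma~\ref{naomi} to rule out non-corner exits.
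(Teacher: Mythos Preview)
Your approach has a genuine gap at Step~1, and it is not merely a technical obstacle: the claim is false as stated. Take $\lambda$ small (say $\lambda=0.01$) and a realisation in which $\0$ sits near the centre of $\region_{N-1}(\0)$, so that the upper-right corner $v^{N-1}_1$ lies at approximately $(5^{N-1}/2,\,5^{N-1}/2)$. Since $5^{N-1}$ is comparable to $n$, any path from $\0$ through $v^{N-1}_1$ to $\slope=(n,0.01n)$ has $\ell_1$-length at least $|v^{N-1}_1|_1+|\slope-v^{N-1}_1|_1\approx n+n=2n$, whereas Lemma~\ref{naomi} gives $T(\0,\slope)\leq 1.01n+C$. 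Because all passage times are at least $1$, the geodesic cannot visit $v^{N-1}_1$; the same estimate rules out the other three corners. The corner-visit phenomenon of Lemma~\ref{inf_geo} genuinely requires the target to be far beyond $\region_k$ (this is exactly why Proposition~\ref{samevertex} imposes $|v-w|\geq 5^{2k}M$), and at the critical scale $5^{N-1}\sim n$ it simply fails.

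There is a secondary gap at Step~3. Even granting Steps~1--2, you would only obtain $\P\bigl(\gamma(\0,\slope)\not\subset\cyl(\slope,cn)\bigr)\geq p$ for each large $n$. Borel--Cantelli along a sparse independent subsequence yields ``infinitely often'', not ``for all but finitely many $n$'', and the vague appeal to ergodicity is not a mechanism that converts the former into the latter. The proposition (and its use in showing $\xi=1$) requires the event to hold surely for each large $n$.

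The paper's argument avoids both issues by a direct, deterministic comparison that uses no information about corners. It takes \emph{any} path $P$ lying entirely in $\cyl(\slope,cn)$ and exhibits a strictly faster competitor $P'$ that leaves the cylinder. The key geometric point is that a thin cylinder around the diagonal segment $\0\slope$ can contain only short stretches (length $\leq 5^{j'-3}$, with $5^{j'}$ comparable to the cylinder width $cn$) of any horizontal or vertical grid line; hence a fixed positive fraction of the edges of $P$ between its first and last $j$-grid vertices $z_1,z_2$ (with $5^j\sim\lambda n$) lie outside the $j'$-grid and carry weight at least $1+\alpha^{j'-1}$. Meanwhile one can connect $z_1$ to $z_2$ by a monotone path entirely in the $j$-grid at cost at most $1+2\alpha^j$ per edge. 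When $c$ is small enough one has $j>j'+5$, which makes the competitor strictly faster, so $P$ is not a geodesic. No probability enters the argument at all.
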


\begin{proof}
Let $P$ be a path from $\0$ to $(n, \lambda n)$ with all of its vertices in $\cyl(\slope,cn)$. 
Let $j=\lfloor \log_5(\lambda n)\rfloor -2$. Let $j'$ be the smallest integer such that no horizontal (or vertical) line segment of length $5^{j'-3}$ lies entirely in $\cyl(\slope,cn)$. Note that if $c$ is small and $n$ is sufficiently large then $j>j'+5$.
Let 
$z_1$ be the first (closest to $\0$) vertex of $P$ in the $j-$grid
and let 
$z_2$ be the last (closest to $(n,\lambda n)$) vertex of $P$ in the $j-$grid.
Note that by the choice of $j$ we have that both the $x$ and $y$ coordinates of $z_2$ are at least $5^j$ 
greater than the respective $x$ and $y$ coordinates of $z_1$. Thus there exists a northeast directed path $P''$ 
from $z_1$ to $z_2$ that is contained entirely in the $j-$grid.
We will show that there exists a path $P'$ which is not contained in $\cyl(\slope,cn)$
which is a faster path from $\0$ to $(n, \lambda n)$.
$P'$ will agree with $P$ from $\0$ to $z_1$ and from $z_2$ to $(n, \lambda n)$.
Between $z_1$ and $z_2$ the path $P'$ is $P''$. The choice
of $j$ insures that this is possible and $|z_1-z_2| >\frac {1+\lambda}{2}n$.

As every edge of $P'$ between $z_1$ and $z_2$ is in the $j-$grid, the sum of the
passage times of all of these edges is at most
$$|z_1-z_2|(1+2 \cdot \alpha^j).$$
We now show that this is faster than $P$  so this path  is not a geodesic.

Define a sequence $\{z_1^i\}_{i=0}^k$ with $z_1^0=z_1$ and $z_1^k=z_2$ with
each $z_1^i$ (with $0<i<k$) the first time that $P$ hits a new vertical line on the $j'-$grid. 
Note that $k$ is at least $5^5>1000.$
If $0<i<k-1$ and the path $P$
between $z_1^i$ and $z_1^{i+1}$ hits another vertical line (besides the start and end lines) in the
$j'-$grid then it has at least $3\cdot 5^{j-1}$ horizontal edges. Similarly we can see that between $z_1^i$ and $z_1^{i+1}$ the path $P$ hits at two horizontal lines in the ${(j-1)}-$grid.
 By the choice of $j'$ and $\lambda \leq 1$ we have 
$|z_1^i-z_1^{i+1}| \leq 3 \cdot 5^{j'}$ and the difference in the $x$ coordinate is $5^{j'}$. As all edges have passage times between 1 and 2 then $P$ is not a geodesic. 

Otherwise as $\lambda \leq 1$ this segment from $z_1$ to $z_2$ of $P$ contains edges on the $j'-$grid on at most one vertical and two horizontal lines. By the choice of $j'$ each of these lines contains at most $5^{j'-3}$ edges in $\cyl(\slope,cn)$. Thus this segment of $P$ contains at most $3\cdot 5^{j'-3}$ edges in the $j'-$grid and at least $5^{j}$ edges in total. Thus at least 85\% of the edges in this segment of $P$ are not in the $j'-$grid and have passage times at least $1+\alpha^{j'}$.
As this applies to all but the first and last segments, at least 80\% of the edges on $P$ from $z_1$ to $z_2$ are not in the $j'-$grid. As above the first and last segments have at most $3\cdot 5^{j-3}$ edges
and thus %at most three times as many edges as the shortest intermediate segment. Thus 
 make up less than three percent of the length of $P$ from $z_1$ to $z_2$ (see Figure 2).

 \begin{figure}[h]\label{fig2:prop2}
%\sidecaption
\includegraphics[scale=.4]{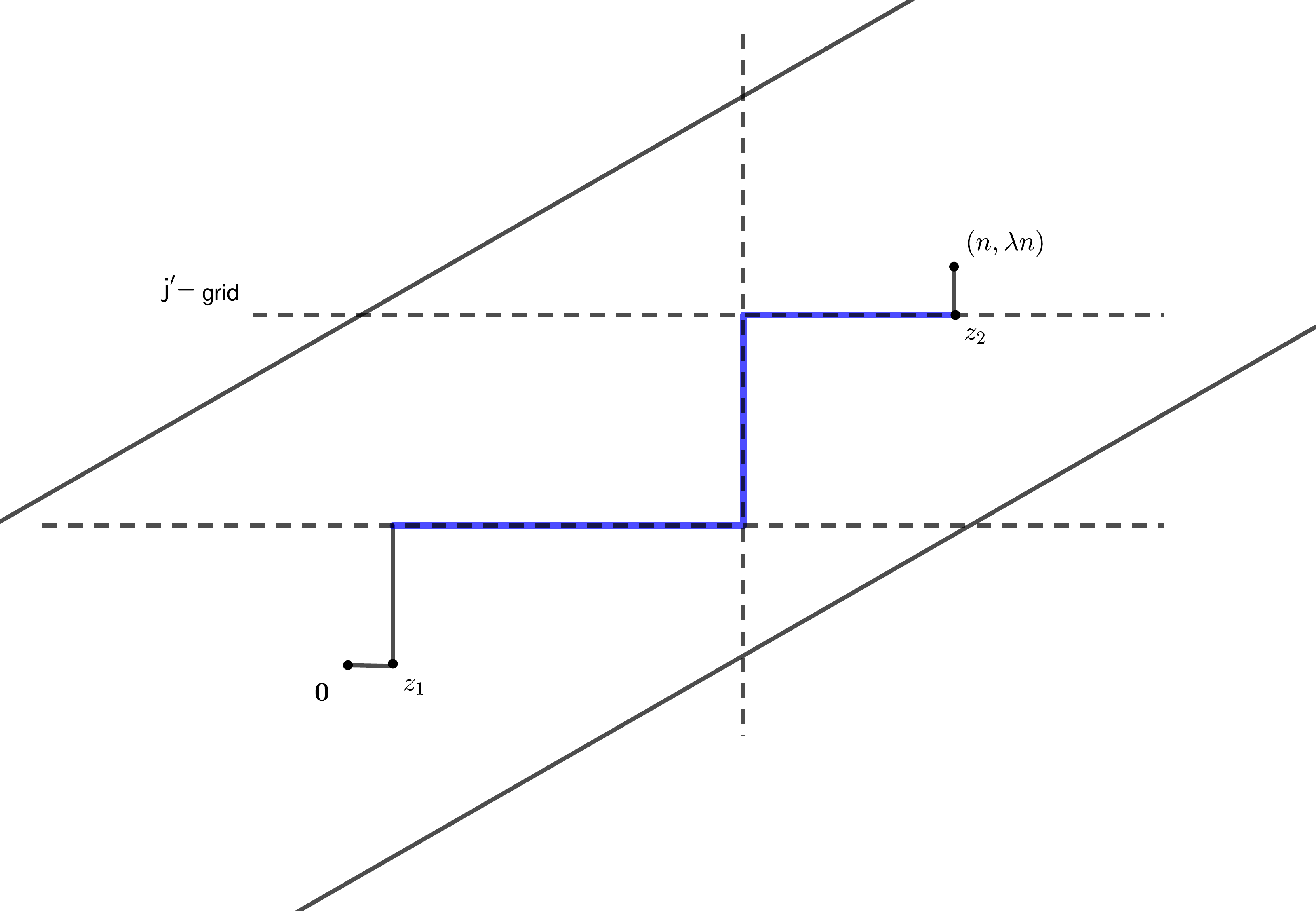}

\caption{Proof of proposition \ref{prop:exp2}: the path $P’$ hits one vertical and two horizontal lines in the $j’-$grid. Now, by definition of $j’$, at most $3\cdot 5^{j’-3}$ edges in the geodesic from $z_1$ to $z_2$ are on the $j’-$grid. Furthermore, by the choice of $j, z_1$ and $z_2$ most of the edges of the path $P$ are in the segment connecting these two vertices. We then put all these ingredients together to lower bound the passage time of the entire path.}       % Give a unique label
\end{figure}

Thus the total passage time for $P$ between $z_1$ and $z_2$ is at least
$$|z_1-v_2|(1+.8\cdot\alpha^{j'}) > |v_1-v_2|(1+2 \alpha^{j}).$$
Thus the passage time along $P$ is more than the passage time along $P'$ and $P$ is not a geodesic.
This proves that the geodesic does not lie in $\cyl(\slope,c n)$.

\end{proof}

\begin{proposition}\label{prop:exp3}
Let $0<\lambda \leq 1$. Remember that $\cyl(\slope,10n)$ is the set of all points within
distance $10n$ of the line segment connecting $\0$ and $(n,\lambda n)$.
For all $n$ sufficiently large $\geo(\0,(n,\lambda n))$ is contained in $\cyl(\slope,10n)$.
\end{proposition}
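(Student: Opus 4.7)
The plan is to leverage the tight concentration bound of Lemma \ref{naomi} together with the trivial lower bound $\pt_e\geq 1$ on passage times. These two facts jointly force the geodesic to use only $O(1)$ more edges than the minimum number required to reach $\slope$, which confines it to an $\ell_1$-neighborhood of the axis-aligned rectangle $R=[0,n]\times[0,\lambda n]$. Since $R$ already lies comfortably inside $\cyl(\slope,10n)$, the conclusion will follow.

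Concretely, I would first apply Lemma \ref{naomi} to obtain $T(\0,\slope)\leq (1+\lambda)n + C$ for some $C=C(\lambda)$. Because every edge satisfies $\pt_e \geq 1$, the geodesic $\gamma(\0,\slope)$ contains at most $(1+\lambda)n + C$ edges. For any vertex $v\in\gamma(\0,\slope)$ the subpaths from $\0$ to $v$ and from $v$ to $\slope$ are themselves geodesics, and any lattice path uses at least $\ell_1$-many edges, so
\[
|v|_1 + |v-\slope|_1 \;\leq\; (1+\lambda)n + C.
\]

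The next step is the elementary $\ell_1$ identity
\[
|v|_1 + |v-\slope|_1 \;=\; (1+\lambda)n + 2\,d_{\ell_1}(v,R),
\]
which is proved by a quick case check on which of the nine regions relative to $R$ contains $v$. Combined with the previous display this yields $d_{\ell_1}(v,R)\leq C/2$, so $v$ lies within Euclidean distance $C/2$ of $R$. Finally, every point of $R$ is within Euclidean distance $\lambda n/\sqrt{1+\lambda^2}\leq n$ of the segment joining $\0$ and $\slope$, the maximum being attained at the corners $(n,0)$ and $(0,\lambda n)$. Together these estimates place every vertex of $\gamma(\0,\slope)$ within Euclidean distance $n + C/2$ of the segment, which is comfortably less than $10n$ for $n$ large.

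There is no serious obstacle here: the whole proof is driven by the $O(1)$ concentration of $T(\0,\slope)$ already supplied by Lemma \ref{naomi}, and the remaining ingredients are elementary $\ell_1$ geometry. The only step that requires a moment's care is the $\ell_1$ identity, whose verification amounts to unfolding the definition of $d_{\ell_1}(v,R)$ in each of the quadrant-type regions around $R$.
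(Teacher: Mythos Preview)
Your argument is correct and in fact proves something sharper than the paper asks for: you confine the geodesic to an $O(1)$ $\ell_1$-neighborhood of the rectangle $R$, hence to distance at most $n+O(1)$ from the segment, whereas the statement only requires distance $10n$.

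The paper's own proof takes a different, cruder route that avoids Lemma~\ref{naomi} entirely. It simply observes that any path escaping $\cyl(\slope,10n)$ must contain at least $10n$ edges, hence has passage time at least $10n$ (using $\pt_e\geq 1$); on the other hand the monotone lattice path from $\0$ to $\slope$ has at most $2n$ edges, each of weight at most $2$, so $T(\0,\slope)\leq 4n$. The contradiction rules out any such escaping path as a geodesic. Your approach buys a much tighter quantitative bound at the modest cost of invoking Lemma~\ref{naomi} and verifying the $\ell_1$ identity; the paper's approach is self-contained and two lines long but only yields the coarse $10n$ containment actually stated.
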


\begin{proof}
If a path $P$ from $\0$ to $(n, \lambda n)$ is not in $\cyl(\slope,10n)$ then the length of $P$ is at least $10 n$.
But as $\lambda \leq 1$ there is a path $P'$ which is in $\cyl(\slope,cn)$ from $\0$ to $(n, \lambda n)$ of length at most $2n$. As every
edge has weight at most $2$ the length of $P'$ is at most $4n$ and $P$ is not the geodesic from
$\0$ to $(n, \lambda n)$.
\end{proof}

\section{Exponents in the coordinate direction} \label{sec:cor}

In this section we consider  $\gamma(\0,(n,0))$. 
Define
\begin{equation} \label{groupon}
\beta=\frac{\log 5}{\log 5-\log \alpha}<1.
\end{equation}
Note that, for any $j$ we can write
\begin{equation} \label{veins}
\alpha^{\beta j}=(5^j)^{\beta-1}.
\end{equation}

\begin{lemma} \label{sunset}
There exists universal constants $C$ and $N$ such that for all $c>C$ and $n>N$ we have

\begin{enumerate}
\item $\tn \geq n$ \label{sunrise}
\item $\P(\tn \leq n+.01n^\beta)>10^{-9}$  \label{fast}
\item $\P(\tn \geq n+.02n^\beta)>10^{-9}$ and \label{ramadan}
\item $\P(\tn \geq n+10n^\beta)=0$. \label{snoop dogg}
\end{enumerate}
\end{lemma}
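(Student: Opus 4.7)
The plan is to pick the natural scale $j^*:=\lfloor \log n/\log(5/\alpha)\rfloor$, at which level \eqref{veins} gives $n\alpha^{j^*}$ and $5^{j^*}$ both of order $n^\beta$. Let $D^+_j$ and $D^-_j$ denote the distances from height $0$ to the nearest $j$-grid horizontal line above and below; both are uniformly distributed on $\{1,\dots,5^j\}$ and determined entirely by $\omega_1$. I will handle all four statements by comparing the true passage time to the cost of detour paths of the form $\0\to(0,h)\to(n,h)\to(n,0)$ that hitch onto a fast horizontal highway.

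Part (1) is immediate, since each edge has weight at least $1$ and the $\ell_1$-distance is $n$. For part (4) I will take $h=D^+_{j^*+1}\le 5^{j^*+1}$; the $n$ horizontal edges on this highway have $k$-value at least $j^*+1$ and contribute at most $1+2\alpha^{j^*+1}$ apiece, while the $2h$ vertical edges each contribute at most $1.3$. Since both $h$ and $n\alpha^{j^*+1}$ are $O(n^\beta)$, this gives $\tn\le n+10n^\beta$ once $n$ is sufficiently large.

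For part (2) I will exhibit a positive-probability event on which the detour is sharp. Fix $k=k(\alpha)\in\N$ large enough that $\alpha^{k-1}\le 10^{-3}$, together with a small $\epsilon=\epsilon(\alpha)>0$ with $2(1+\alpha)\epsilon+\alpha^{k-1}<10^{-2}$, and set $E_2:=\{D^+_{j^*+k}\le \epsilon n^\beta\}$. Since $D^+_{j^*+k}$ is uniform on $\{1,\dots,5^{j^*+k}\}$, $\P(E_2)$ is bounded below by $\epsilon/5^k$, a positive constant exceeding $10^{-9}$. On $E_2$ the detour at level $j^*+k$ has cost at most $n+\bigl(2(1+\alpha)\epsilon+\alpha^{k-1}\bigr)n^\beta<n+10^{-2}n^\beta$, which gives the claim.

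The main obstacle is part (3), since the lower bound must hold uniformly over \emph{every} path. The plan is to engineer an event on $\omega_1$ that simultaneously pushes the closest fast grid line far from the origin. Take $E_3:=\{\omega_1(j^*)=\omega_1(j^*+1)=2\}$, of probability $5^{-2}=1/25$; a direct computation on the first $j^*+2$ coordinates of $\omega_1$ yields $\min(D^+_{j^*+1},D^-_{j^*+1})\ge 2\cdot 5^{j^*}\ge 0.4\,n^\beta$ on $E_3$. I will then dichotomize an arbitrary path $P$ from $\0$ to $(n,0)$ by its largest absolute height $H$: if $H<0.4n^\beta$, the strip $\{|y|\le H\}$ carries no $(j^*+1)$-grid horizontal line, so every horizontal edge of $P$ has $k$-value at most $j^*$ and weight at least $1+\alpha^{j^*}$, forcing $\Gamma(P)\ge n+n\alpha^{j^*}\ge n+n^\beta$; if instead $H\ge 0.4n^\beta$, then $P$ already uses at least $2H\ge 0.8n^\beta$ vertical edges, so $\Gamma(P)\ge n+0.8n^\beta$. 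Either branch delivers $\tn\ge n+0.02n^\beta$ on $E_3$. The crux is the geometric dichotomy ``short excursion forces slow horizontal edges'' versus ``long excursion forces costly vertical travel'', powered by the $\omega_1$-event that anchors the fastest grid line near the origin at distance of order $n^\beta$.
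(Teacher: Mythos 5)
Your overall strategy is sound and it is a genuinely different route from the paper's. The paper builds staircase paths $\Gamma_l$ that climb through successive grids (so the extra cost is a geometric series $\sum 20\cdot 5^k\alpha^k$ plus the top-level term), works at the scale $\lfloor \beta j\rfloor$ with $5^j\le n<5^{j+1}$, and for the lower bound conditions on the event that no $\lfloor\beta j\rfloor$-grid line falls in a strip of width $\approx 0.1\cdot 5^{\beta j}$, estimating its probability from the uniform offset. You instead use flat up--over--down detours to the nearest fast horizontal line at level $j^*+O(1)$ with $j^*=\lfloor \log n/\log(5/\alpha)\rfloor$, which balances $5^{j^*}\asymp n^\beta$ and $n\alpha^{j^*+1}\le n^\beta$ simultaneously, and for part (3) you fix a digit of $\omega_1$ to force the nearest $(j^*+1)$-grid line to distance at least $2\cdot 5^{j^*}\ge 0.4\,n^\beta$, an event of probability exactly $1/25$. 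The cylinder dichotomy (a low excursion forces $n$ horizontal edges of weight at least $1+\alpha^{j^*}$, hence excess $n\alpha^{j^*}\ge n^\beta$ since $(5/\alpha)^{j^*}\le n$; a high excursion forces at least $0.8\,n^\beta$ vertical edges) is the same mechanism the paper uses, but your digit-fixing event makes the probability bound cleaner and more robust than the paper's offset estimate. Your part (2) is fine once the bound $n\alpha^{j^*+k}\le \alpha^{k-1}n^\beta$ (from $n\alpha^{j^*}\le n^\beta/\alpha$) is written out, and the probability $\ge \epsilon/5^k$ does clear $10^{-9}$ uniformly in $\alpha\in(0,0.2)$, though you should say so since $k$ and $\epsilon$ depend on $\alpha$.

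The one step that does not deliver what is claimed is part (4). With $h=D^{+}_{j^*+1}$ you only know $h\le 5^{j^*+1}\le 5n^\beta$, so the $2h$ vertical edges, at weight up to $1.3$ each, can cost up to $13\,n^\beta$ on top of the horizontal excess $\approx n\alpha^{j^*+1}\le 1.001\,n^\beta$; the total excess can be about $14\,n^\beta$, and this does not shrink as $n\to\infty$, so the inference ``both are $O(n^\beta)$, hence $\tn\le n+10n^\beta$ for large $n$'' is a non sequitur for the specific constant $10$ in the statement. The repair is immediate and you have already set it up: take $h=\min(D^{+}_{j^*+1},D^{-}_{j^*+1})\le 5^{j^*+1}/2\le 2.5\,n^\beta$, so the vertical cost is at most $2\cdot 2.5\,n^\beta\cdot 1.3=6.5\,n^\beta$ and the total excess is below $8\,n^\beta<10\,n^\beta$, which gives $\P(\tn\ge n+10n^\beta)=0$ as required. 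With that adjustment (and carrying the $1.001$ factors from the $X_{k(e),e}$ terms through your part (2) arithmetic), the proof is complete.
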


\begin{proof}
The first inequality is true because all passage times are at least 1.

For the second inequality we
define $\Gamma_l$ to be the following path from $(0,0)$ to $(n,0)$. 
The start of $\Gamma_l$ goes northeast
from $(0,0)$ to the line $y=y_l$, where $y_{l}$ is the lowest non-negative number such that the
line $y=y_{l}$ is in the $l$-grid.
Suppose we have defined the path to the point $(x',y')$ where both the lines $x=x'$ and $y=y'$ are in the $l'$-grid. Then we extend the path so that it goes east to the $(l' +1)$-grid and then north to the $(l'+1)$-grid. We continue until we have hit the line $y=y_l$. 
%We define it inductively so that for every $l'<l$ it takes as few steps as possible getting to the $l'$-grid. 
%{\bf Fix the preceding definition.} 
The final portion of $\Gamma_l$ is defined in a symmetric manner. It goes northwest from $(n,0)$ to the line $y=y_{\ell}$. Then $\Gamma_l$ connects these two pieces by moving horizontally along the line $y=y_{\ell}$. 

Given $n$, choose $j$ such that 
\begin{equation} \label{blowout} 5^j \leq n <5^{j+1}. \end{equation}
Let $\strike$ be the event that there exists $y^* \in [0, .001\cdot 5^{\beta j}]$ with the line $y=y^*$ in the 
$(\lfloor  \beta j \rfloor +5)$-grid. If $\strike$ occurs then $\Gamma_{\lfloor \beta j  \rfloor +5}$ contains:
\begin{enumerate}
\item at most $n+ .002 \cdot 5^{\beta j}$ edges,
\item at most $20 \cdot 5^k$ edges in the $k$-grid but not the $(k+1)$-grid for all $k <\lfloor \beta j \rfloor +5$, and, 
\item at most $n$ edges in the $(\lfloor \beta j \rfloor +5)$-grid.
\end{enumerate}

If $\strike$ occurs, from $1-3$ above and the definition of the $X_{k(e),e}$, we have 

\begin{eqnarray*}
\sum_{e \in \Gamma_{\lfloor \beta j \rfloor +5 }}\alpha^{k(e)}+X_{k(e),e}
& \leq & \sum_{e \in \Gamma_{\lfloor \beta j \rfloor +5 }}1.001\alpha^{k(e)}\\
& \leq & 1.001 \sum_k \left( 20 \cdot 5^k \alpha^k\right) + 1.001n \alpha^{\lfloor \beta j\rfloor +5}\\
& \leq & C+ .001 \cdot 5^{\beta j-1}\\
& \leq & C+ .001n^{\beta}.
\end{eqnarray*}
Then, if $\strike$  occurs
\begin{eqnarray*}
T((0,0),(n,0)) &\leq& T(\Gamma_{\lfloor \beta j  \rfloor+5})\\
 & \leq &| \Gamma_{\lfloor \beta j  \rfloor+5}| + \sum_{e \in \Gamma_{\lfloor \beta j \rfloor +5 }}\alpha^{k(e)}+X_{k(e),e}\\
 & \leq & n  + .002 \cdot 5^{\beta j} +C+.001n^{\beta} \\
% & \leq & n +.001n^{\beta} + .002 \cdot 5^{\beta j} + \sum_{k \geq 1} 16 \cdot 4^k \alpha^k + n \alpha^{\lfloor \beta j\rfloor +5}\\
% & \leq & C+n +.001n^{\beta} +.002 \cdot 5^{\beta j} + n \alpha ^{\beta j} \alpha^{4}\\
% & \leq & C+n +.001n^{\beta} +.002 \cdot 5^{\beta j} + n (5^j)^{\beta -1} \alpha^{4}\\
% & \leq & C+n +.001n^{\beta} +.002 \cdot 5^{\beta j} + 5\alpha^{4 }(5^j)^{\beta}\\
% & \leq & C+n +.001n^{\beta} +.002 \cdot 5^{\beta j} + .001n^{\beta}\\
 & \leq & C+n + .004n^{\beta}\\
 & \leq & n + .01n^{\beta}.
\end{eqnarray*}
Then
$$\P(\tn \leq n+.01n^\beta) \geq \P(\strike)\geq .001\cdot 5^{-5} \geq 2 \cdot 10^{-9}$$
and the result follows.

The fourth inequality follows in much the same way as the second
except we do not assume that the
event $\strike$ occurs. In this case we have that $\Gamma_{\lfloor \beta j  \rfloor }$ contains
\begin{enumerate}
\item at most $n+ 2 \cdot 5^{\beta j}$ edges
\item at most $20 \cdot 5^k$ edges in the $k$-grid but not the $(k+1)$-grid for all $k <\lfloor \beta j \rfloor $ and 
\item at most $n$ edges in the $(\lfloor \beta j \rfloor)$-grid.
\end{enumerate}
Then a similar calculation as above proves the claim.
%Thus the estimate of $.002 \cdot 5^{\beta j} $ in the calculation above is replaced by $2 \cdot 5^{\beta j} .$ We replace the event $R$ by the event $R_c$ which is the 
%sum of all the finite energy terms on $\Gamma_{\lfloor \beta j \rfloor}$ is less than $(c-2)n^\beta$.
%Then the calculation above shows that if $R_c$ occurs then $\tn \leq n+cn^\beta.$
%For $c\geq 3$ we have that $\p(R_c) \geq 1-e^{-c}$. Putting this together 
%completes the calculation.

For the third inequality we note that if there does not exist $y_0$ such that
$|y_0| \leq .1\cdot 5^{\beta j}$ such that the line $y=y_0$ is in the $\lfloor \beta j \rfloor$-grid and if
$\Gamma'$  be any path from $(0,0)$ to $(n,0)$ in the cylinder $\{(x,y): |y| \leq  .1\cdot 5^{\beta j}\}$,
then
\begin{eqnarray*}
T(\Gamma')
 & \geq & n(1+\alpha^{\beta j -1})\\
 & \geq & n +\frac1\alpha n \alpha^{\beta j }\\
 & \geq & n +\frac1\alpha n (5^j)^{\beta -1 }\\
 & \geq & n +\frac1\alpha n (5^j)^{\beta } (5^j)^{-1 }\\
 & \geq & n +\frac1\alpha  (5^j)^{\beta } \\
 & \geq & n +\frac1\alpha  (5^{j+1})^{\beta }5^{-\beta} \\
 & \geq & n +\frac1{\alpha 5^\beta} n^{\beta } \\
 & \geq & n + n^{\beta }.
\end{eqnarray*}
Now let $\Gamma''$  
be any path from $(0,0)$ to $(n,0)$ not contained in the cylinder 
$\{(x,y): |y| \leq  .1\cdot 5^{\beta j}\}.$
Then by \eqref{groupon} and \ref{blowout}
\begin{eqnarray*}
T(\Gamma'')
 & \geq & n+.2\cdot 5^{\beta j}\\
 & \geq & n+.2\cdot (5^{j+1})^{\beta}5^{-\beta}\\
 & \geq & n+.04 n^{\beta }.
\end{eqnarray*}
As any path from $(0,0)$ to $(n,0)$ falls into one of these two categories we have  that
 $$T((0,0),(n,0)) \geq  \min( n+ n^{\beta},n+.04 n^{\beta } )=  n+.04 n^{\beta }.$$
This happens with probability at least 
$$1- \frac{.3\cdot 5^{\beta j}}{5^{\lfloor \beta j \rfloor}} \geq 10^{-9}.$$

\end{proof}

We use Lemma \ref{sunset} to show that the variance exponent is $\beta$ along the axes.
\begin{lemma} \label{thief}
There exists $K>0$ such that for all $n$ sufficiently large
$$ \frac1K n^{2\beta} < Var(\tn)<K n^{2\beta} .$$
\end{lemma}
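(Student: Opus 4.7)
The plan is to deduce both the upper and lower variance bounds almost directly from Lemma \ref{sunset}. Set $X = \tn - n$; by part (\ref{sunrise}) of that lemma, $X$ is nonnegative.

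For the upper bound I would invoke part (\ref{snoop dogg}), which gives $X \leq 10 n^{\beta}$ almost surely. Since variance is translation invariant and $\EE[X^2] \leq (10 n^{\beta})^2$, this immediately yields $\var(\tn) = \var(X) \leq 100\, n^{2\beta}$, so the upper bound holds with constant $100$.

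For the lower bound I would run the following two-point argument using parts (\ref{fast}) and (\ref{ramadan}). Let $A = \{X \leq 0.01\, n^{\beta}\}$ and $B = \{X \geq 0.02\, n^{\beta}\}$. Both events have probability at least $10^{-9}$, and the intervals $[0, 0.01\, n^{\beta}]$ and $[0.02\, n^{\beta}, \infty)$ are separated by $0.01\, n^{\beta}$. Thus, whatever the value of $\mu = \EE[X]$, at least one of these two intervals lies at distance at least $0.005\, n^{\beta}$ from $\mu$. Restricting the expectation defining the variance to the corresponding event $A$ or $B$ gives
\[
\var(X) = \EE[(X-\mu)^2] \;\geq\; 10^{-9}\,(0.005\, n^{\beta})^2,
\]
which is a positive multiple of $n^{2\beta}$. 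Combining the two estimates produces the desired constant $K$.

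The main obstacle to this theorem was in fact already overcome in Lemma \ref{sunset}: the grid construction is exploited there both to produce the deterministic upper bound of order $n^{\beta}$ on the fluctuations and to exhibit two distinct behavioral regimes at that same scale, governed by whether or not a fast grid line happens to sit close to the $x$-axis. Once this bounded two-point spread at the correct scale is in hand, the present statement is essentially a textbook computation and requires no new probabilistic input.
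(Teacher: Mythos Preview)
Your proof is correct and follows exactly the same approach as the paper: the lower bound from parts (\ref{fast}) and (\ref{ramadan}) of Lemma \ref{sunset}, and the upper bound from parts (\ref{sunrise}) and (\ref{snoop dogg}). You have simply spelled out the standard two-point and boundedness arguments that the paper leaves implicit.
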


\begin{proof}
The lower bound follows directly from parts  \ref{fast} and \ref{ramadan} from Lemma \ref{sunset}.
The upper bound follows from parts \ref{sunrise} and \ref{snoop dogg}.
\end{proof}

For any $K$ define $\cyl((n,0),K)$ be the subgraph with vertices  $\{(x,y):\ |y| \leq K \}$ and all edges between two vertices in the set.
Now we show that the fluctuation exponent is also $\beta$.
\begin{lemma} \label{coaching} For any $\epsilon>0$
$$ \P(\gamma(\0,(n,0)) \text{ is contained in } \cyl((n,0),n^{\beta-\epsilon})=o(1).$$
Also
$$ \P(\gamma(\0,(n,0)) \text{ is not contained in } \cyl((n,0),10 n^{\beta})=0.$$
\end{lemma}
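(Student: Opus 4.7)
The lemma has two parts. The upper bound $\P(\gamma(\0,(n,0))\not\subset \cyl((n,0),10n^\beta))=0$ follows directly from Lemma \ref{sunset}(\ref{snoop dogg}): if a path from $\0$ to $(n,0)$ exits $\cyl((n,0),10n^\beta)$ it must contain at least $n$ horizontal edges (from the $x$-displacement) and at least $20n^\beta$ vertical edges (to rise above height $10n^\beta$ and return to $y=0$). Since every edge has weight at least $1$, such a path has passage time at least $n+20n^\beta$, contradicting the almost sure bound $\tn<n+10n^\beta$.

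For the lower bound, the plan is to show that with high probability the thin cylinder $\cyl((n,0),n^{\beta-\epsilon})$ contains no horizontal line from a sufficiently fast grid. Let $k=k(n)$ be the smallest integer with $5^k\geq n^{\beta-\epsilon/2}$ and let $F_n$ be the event that some $y_0 \in [-n^{\beta-\epsilon}, n^{\beta-\epsilon}]$ satisfies $L(\sigma^{y_0}\omega_1)\geq k$, i.e.\ that a horizontal line of the $k$-grid lies inside the thin cylinder. The set of such $y$-coordinates is a single residue class modulo $5^k$ (since $\sigma^{5^k}$ fixes the first $k$ coordinates of $\omega_1$), and its position in $\{0,\ldots,5^k-1\}$ is uniform by the uniformity of $\omega_1$. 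Consequently
\[
\P(F_n)\leq \frac{2n^{\beta-\epsilon}+1}{5^k}=O(n^{-\epsilon/2}).
\]

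On the complement of $F_n$, every horizontal edge whose endpoints lie in the thin cylinder has $k$-value at most $k-1$ and therefore passage time at least $1+\alpha^{k-1}$. Since any path from $\0$ to $(n,0)$ contains at least $n$ horizontal edges, any such path remaining inside the thin cylinder has passage time at least $n(1+\alpha^{k-1})$. Using \eqref{groupon}, equivalently $\alpha=5^{(\beta-1)/\beta}$, together with $5^{k-1}<n^{\beta-\epsilon/2}$, a direct computation gives
\[
n\alpha^{k-1} \;\geq\; c\, n^{\beta+\delta},\qquad \delta=\frac{\epsilon(1-\beta)}{2\beta}>0,
\]
for an explicit constant $c>0$. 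For $n$ large, $n\alpha^{k-1}>10n^\beta$, which contradicts $\tn<n+10n^\beta$. Hence on $F_n^c$ the geodesic cannot lie inside the thin cylinder, and so
\[
\P(\gamma(\0,(n,0))\subset \cyl((n,0),n^{\beta-\epsilon}))\leq \P(F_n)=o(1).
\]

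The principal obstacle is the exponent computation showing $n\alpha^{k-1}\gg n^\beta$. This relies on the identity \eqref{groupon} for $\beta$ and on calibrating the auxiliary scale $n^{\beta-\epsilon/2}$ so that both $\P(F_n)\to 0$ and the gap $\delta>0$ in the exponent are preserved; once this calibration is set, the contradiction with Lemma \ref{sunset}(\ref{snoop dogg}) closes the argument for all sufficiently large $n$.
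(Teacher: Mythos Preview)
Your proof is correct and follows essentially the same approach as the paper: both parts hinge on Lemma~\ref{sunset}(\ref{snoop dogg}), with the lower bound obtained by showing that with high probability no sufficiently fast horizontal grid line meets the thin cylinder, forcing any path inside it to have passage time exceeding $n+10n^\beta$. Your use of the intermediate scale $n^{\beta-\epsilon/2}$ (yielding the explicit gap $\delta=\tfrac{\epsilon(1-\beta)}{2\beta}$) is a slightly cleaner calibration than the paper's, but the argument is the same.
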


\begin{proof}

Define $$j=j(n)=\max\{k(e):~\mbox{$e$ is an horizontal edge in $\gamma(\0,(n,0))$}\}.$$

We first notice that all horizontal edges in $\gamma(\0,(n,0))$ with $k(e)=j$ are contained in the horizontal line that is furthest away from the $x-$axis. Consider a path $P$ that goes up to the ${j+1}$ grid and connects $\0$ and $(n,0)$. %(see {FIGURE}). 
%(see \tetcolor{red}{FIGURE}). 
We have
\[
T(\0,(n,0))\leq T(P).
\]
by definition.

For any $\epsilon>0$ and for all $n$ sufficiently large we will show that
$$ \P(\exists \text{ a path $P$ from $\0$ to $(n,0)$ contained in } \cyl((n,0),n^{\beta-\epsilon})\text{ with } T(P) \leq n+ 10n^{\beta })=o(1).$$

There are at least $n$ horizontal edges in any path from $\0$ to $(n,0)$. If all the horizontal edges in 
$\cyl((n,0),n^{\beta-\epsilon})$ have passage time at least $1+10n^{\beta-1}$ then the passage time across any path 
from $\0$ to $(n,0)$ entirely contained in
$\cyl((n,0),n^{\beta-\epsilon})$ has passage time at least $n+10n^\beta$.

By part \ref{snoop dogg} of Lemma \ref{sunset} the event that 
$$
\text{$\gamma(\0,(n,0))$ is contained in } \cyl((n,0),n^{\beta-\epsilon})
$$
is contained in the event that
$$
\text{there exists a path $P$ from $\0$ to $(n,0)$ contained in } \cyl((n,0),n^{\beta-\epsilon})\text{ with } T(P) \leq n+ 10n^{\beta }.
$$
This last event is in turn contained in the event that 
$$
\text{there exists a horizontal edge in $\cyl((n,0),n^{\beta-\epsilon})$ with passage time at most $1+ 10n^{\beta-1}$.}
$$ 
This requires that there is a line of the form $y=l$ which is in the $(\lfloor \beta j\rfloor-3)$-grid 
with $l \in [-n^{\beta-\epsilon},n^{\beta-\epsilon}]$. 
By the choice of $\beta$ and $j$ the probability of this is at most 
$$\frac{2n^{\beta-\epsilon}+1}{5^{\beta j-3}} \leq Cn^{-\epsilon}.$$

The upper bound follows from part \ref{snoop dogg} of Lemma \ref{sunset} and the fact that all passage times
are at least 1.
\end{proof}

\subsection{Proof of Theorem \ref{Thm3}}

For the non-coordinate directions, $\chi=0$ follows directly from Lemma \ref{naomi} and $\xi=1$ follows combining Proposition \ref{osaka} and Proposition \ref{prop:exp3}. For the coordinate directions, $\chi=\xi=\beta$ is a consequence of Lemma \ref{thief} and Lemma \ref{coaching}.

\begin{acknowledgement}
The authors are grateful to the anonymous referee whose suggestions improved the presentation of our work.  G.B. thanks Ken Alexander and Michael Damron for fruitful conversations at the early stages of this project. C.H. was supported by grant DMS-1712701.
\end{acknowledgement}

%%%%%%%%%%%%%%%%%%%%%%%% referenc.tex %%%%%%%%%%%%%%%%%%%%%%%%%%%%%%
% sample references
% %
% Use this file as a template for your own input.
%
%%%%%%%%%%%%%%%%%%%%%%%% Springer-Verlag %%%%%%%%%%%%%%%%%%%%%%%%%%
%
% BibTeX users please use
\bibliographystyle{amsalpha}
 \bibliography{fpp}

\providecommand{\bysame}{\leavevmode\hbox to3em{\hrulefill}\thinspace}
\providecommand{\MR}{\relax\ifhmode\unskip\space\fi MR }
% \MRhref is called by the amsart/book/proc definition of \MR.
\providecommand{\MRhref}[2]{%
  \href{http://www.ams.org/mathscinet-getitem?mr=#1}{#2}
}
\providecommand{\href}[2]{#2}
\begin{thebibliography}{ADH17}

\bibitem[AB18]{aleber18}
Kenneth~S. Alexander and Quentin Berger, \emph{Geodesics toward corners in
  first passage percolation}, J. Stat. Phys. \textbf{172} (2018), no.~4,
  1029--1056. \MR{3830297}

\bibitem[ADH17]{ADH17}
Antonio Auffinger, Michael Damron, and Jack Hanson, \emph{50 years of
  first-passage percolation}, vol.~68, American Mathematical Soc., 2017.

\bibitem[AH16]{AH16}
Daniel Ahlberg and Christopher Hoffman, \emph{Random coalescing geodesics in
  first-passage percolation}, arXiv:1609.02447v1 (2016).

\bibitem[Boi90]{boivin90}
Daniel Boivin, \emph{First passage percolation: the stationary case}, Probab.
  Theory Related Fields \textbf{86} (1990), no.~4, 491--499. \MR{1074741}

\bibitem[CD81]{CD81}
J~Theodore Cox and Richard Durrett, \emph{Some limit theorems for percolation
  processes with necessary and sufficient conditions}, The Annals of
  Probability (1981), 583--603.

\bibitem[DH14]{DH14}
Michael Damron and Jack Hanson, \emph{Busemann functions and infinite geodesics
  in two-dimensional first-passage percolation}, Communications in Mathematical
  Physics \textbf{325} (2014), no.~3, 917--963.

\bibitem[HM95a]{hagmee95}
Olle H{\"a}ggstr{\"o}m and Ronald Meester, \emph{Asymptotic shapes for
  stationary first passage percolation}, Ann. Probab. \textbf{23} (1995),
  no.~4, 1511--1522. \MR{1379157 (96m:60237)}

\bibitem[HM95b]{HM95}
Olle Haggstrom and Ronald Meester, \emph{Asymptotic shapes for stationary first
  passage percolation}, The Annals of Probability (1995), 1511--1522.

\bibitem[Hof05]{hoffman05}
Christopher Hoffman, \emph{Coexistence for {R}ichardson type competing spatial
  growth models}, Ann. Appl. Probab. \textbf{15} (2005), no.~1B, 739--747.
  \MR{2114988 (2005m:60235)}

\bibitem[Hof08]{H08}
\bysame, \emph{Geodesics in first passage percolation}, The Annals of Applied
  Probability \textbf{18} (2008), no.~5, 1944--1969.

\bibitem[NP95]{NP95}
Charles~M Newman and Marcelo~ST Piza, \emph{Divergence of shape fluctuations in
  two dimensions}, The Annals of Probability (1995), 977--1005.

\end{thebibliography}

\end{document}